\documentclass{amsproc}

\usepackage{amssymb,commath,mathtools}

\newtheorem{theorem}{Theorem}[section]
\newtheorem{lemma}[theorem]{Lemma}
\newtheorem{proposition}[theorem]{Proposition}
\newtheorem{corollary}[theorem]{Corollary}

\theoremstyle{definition}
\newtheorem{definition}[theorem]{Definition}

\theoremstyle{remark}

\numberwithin{equation}{section}

\newcommand{\C}{\mathbb{C}}
\newcommand{\R}{\mathbb{R}}

\newcommand{\N}{\mathbb{N}}
\newcommand{\T}{\mathbb{T}}
\newcommand{\Oct}{\mathbb{O}}

\newcommand{\vol}{\mathrm{vol}}

\newcommand{\End}{\mathrm{End}}

\newcommand{\U}{\mathrm{U}}
\newcommand{\SU}{\mathrm{SU}}
\newcommand{\Spe}{\mathrm{S}}

\newcommand{\SO}{\mathrm{SO}}
\newcommand{\so}{\mathfrak{so}}
\newcommand{\Symp}{\mathrm{Sp}}

\newcommand{\mA}{\mathcal{A}}
\newcommand{\mT}{\mathcal{T}}
\newcommand{\mH}{\mathcal{H}}
\newcommand{\mP}{\mathcal{P}}

\newcommand{\rN}{\overrightarrow{\N}}

\newcommand{\gge}{\mathfrak{e}}
\renewcommand{\gg}{\mathfrak{g}}
\newcommand{\gh}{\mathfrak{h}}
\newcommand{\gk}{\mathfrak{k}}
\newcommand{\gp}{\mathfrak{p}}

\begin{document}

\title[Radial Toeplitz operators on Cartan domains]{Radial Toeplitz operators on the weighted Bergman spaces of Cartan domains}

\author{Matthew Dawson}
\address{Centro de Investigaci\'on en Matem\'aticas, M\'erida, Mexico}
\email{matthew.dawson@cimat.mx}
\thanks{Research supported by SNI and Conacyt.}

\author{Ra\'ul Quiroga-Barranco}
\address{Centro de Investigaci\'on en Matem\'aticas, Guanajuato, Mexico}
\email{quiroga@cimat.mx}
\thanks{}

\subjclass[2010]{Primary 47B35 22D10 Secondary 32M15  22E46}

\begin{abstract}
	Let $D$ be an irreducible bounded symmetric domain with biholomorphism group $G$ with maximal compact subgroup $K$. For the Toeplitz operators with $K$-invariant symbols we provide explicit simultaneous diagonalization formulas on every weighted Bergman space. The expressions are given in the general case, but are also worked out explicitly for every irreducible bounded symmetric domain including the exceptional ones.
\end{abstract}

\maketitle

\section{Introduction}\label{sec:Introduction}
Irreducible bounded symmetric domains and the weighted Bergman spaces that they support constitute a fundamental object in functional analysis. These allow us to consider Toeplitz operators and irreducible unitary representations of Lie groups. In the last two decades, it has been observed that there is a natural and far reaching relationship between these operators and representations. This work continues the study of Toeplitz operators derived from such interaction.

For $D$ a bounded symmetric domain with biholomorphism group $G$, it was proved in \cite{DOQJFA} that there are plenty of closed subgroups $H$ in $G$ so that the Toeplitz operators with $H$-invariant symbols pairwise commute. This yields commutative $C^*$-algebras generated by Toeplitz operators. Ultimately, this is a consequence of multiplicity-free restriction results. Among the latter, the very first one is found in \cite{Schmid} for the maximal compact subgroup $K$ in $G$ that fixes a given point.

We consider the multiplicity-free decomposition for the restriction to $K$ of the unitary representations of $G$ on the Bergman spaces, as found in \cite{Schmid} or \cite{UpmeierBook}. This is used to achieve our main goal: an explicit simultaneous diagonalization of Toeplitz operators with $K$-invariant symbols. A general but also very explicit formula is obtained in Theorem~\ref{thm:Schmid_diagonal_Lebesgue}, where every $K$-invariant symbol is really seen as a complex-valued function defined on $[0,1)^r$, where $r$ is the rank of $D$. Correspondingly, our formulas are given by integrals computed in this set. Our computations make use of the spherical and conical polynomials for the representation of $K$ on the space of polynomials on the domain $D$. Hence, Theorem~\ref{thm:Schmid_diagonal} provides the simultaneous diagonalization in terms of these polynomials. Since there are explicit expressions for the conical polynomials (see \cite{Johnson}) this should prove to be useful in further developments.

On the other hand, we rely strongly on the Jordan structures associated to the irreducible bounded symmetric domains. For example, the preliminary computation given by Theorem~\ref{thm:RadialToepNormComputation} reduces the diagonalization of a Toeplitz operator with $K$-invariant symbol to integrals over the cone of the largest tube type domain contained in $D$. Most importantly, the Jordan structure allows us to consider primitives (a sort of minimal tripotent) thus making possible to obtain the formula in Theorem~\ref{thm:Schmid_diagonal_Lebesgue} that involves integrals over $[0,1)^r$. This integration is in fact given by taking coordinates with respect to a maximal collection of primitives (in the Jordan setup) or a basis of a maximal flat (in the Lie setup). In the last section we exhibit examples of these sets for every irreducible bounded symmetric domain, including the exceptional ones, which leads us to completely explicit formulas that provide the diagonalization of Toeplitz operators with $K$-invariant symbols.

The properties of Jordan pairs and analysis on irreducible bounded symmetric domains that we require are reviewed in Sections \ref{sec:JordanBSD} and \ref{sec:AnalysisBSD}, respectively. We obtain our main results for irreducible bounded symmetric domains in the last two sections: Section~\ref{sec:iso-K_and_radial-operators} with general expressions for an arbitrary domain and Section~\ref{sec:radial_Toeplitz} with explicit formulas for every irreducible case.

\section{Jordan structures associated to bounded symmetric domains}
\label{sec:JordanBSD}
In this section we collect some standard facts on bounded symmetric domains. The standard references are \cite{FarautKoranyi,FarautKoranyiBook,Helgason,Helgason2,LoosBSDJordan,UpmeierBook,Wallach1} where the reader will find further details and proofs.

Let $D$ be an irreducible Hermitian symmetric space of non-compact type, and denote by $G$ the connected component of the automorphism group of $D$ and by $K$ the subgroup of $G$ fixing some point. Consider the corresponding Cartan decomposition given by
\[
\gg = \gk \oplus \gp
\]
where $\gg$ and $\gk$ are the Lie algebras of $G$ and $K$, respectively. Choose $\gh$ a Cartan subalgebra of $\gk$, which is thus a Cartan subalgebra of $\gg$. For the complexified Lie algebras, we recall that a root $\alpha$ of $\gg^\C$ with respect to $\gh^\C$ is called compact if the corresponding root space $\gg_\alpha$ is contained in $\gk^\C$. Let us fix an order and let us denote by $\Phi^+$ the space of all positive noncompact roots. We consider the complex space
\[
\gp_+ = \sum_{\alpha \in \Phi^+} \gg_\alpha.
\]
Then, it is well known that $D$ admits a canonical embedding as a circled bounded domain in $\gp_+$ such that $K$ is precisely the isotropy subgroup of the origin: this is the Harish-Chandra realization of $D$. The complex space $\gp_+$ carries a natural Hermitian inner product obtained from the Killing form of $\gg^\C$ and the conjugation $\tau$ with respect to the real form $\gk \oplus i\gp$.

Every element $X \in \gp_+$ can be considered as a holomorphic vector field over $D$ whose one-parameter group of transformations is $t \mapsto \exp(tX)$ acting on $D$, and so $X$ can be seen as a holomorphic map $D \rightarrow \gp_+$. Then, the vector field $X$ can be written as
\[
X(z) = y - Q_y(z,z),
\]
for every $z \in D$, where $y = X(0)$ and $Q_y(z,w)$ is a complex bilinear form of $(z,w)$. Furthermore, for every $z,w$ the assignment $y \mapsto Q_y(z,w)$ is complex anti-linear. This yields a triple product
\begin{align*}
\gp_+ \times \gp_+ \times \gp_+ &\rightarrow \gp_+  \\
\{zyw\} &= Q_y(z,w),
\end{align*}
that defines a Jordan pair which also carries a natural involutive anti-linear map $z \mapsto z^*$. From now on, we will consider $\gp_+$ endowed with this algebraic structure.

For this setup, an element $z \in \gp_+$ is called tripotent if and only if $\{zzz\} = z$, and two tripotent elements $z,w$ are called orthogonal when $\{zwy\} = 0$ for all $y \in \gp_+$. A tripotent element is called primitive if and only if it is not the sum of two orthogonal tripotents. Let $e_1, \dots, e_r$ be a maximal collection of mutually orthogonal primitive tripotents. It follows from \cite{LoosBSDJordan} that $r$ is precisely the rank of $D$. It will be useful to consider the element
\[
e = e_1 + \dots + e_r,
\]
that is a tripotent element in $\gp_+$ as well.

For every tripotent $z$ there is a so-called Peirce decomposition
\[
\gp_+ = V_1(z) \oplus V_{\frac{1}{2}}(z) \oplus V_0(z),
\]
where
\[
V_\alpha(z) = \{ w \in \gp_+ \mid \{zzw\} = \alpha w\}.
\]
Then, the space $V_1(z)$ is a Jordan algebra with the product $x\circ y = \{xzy\}$ and for this structure $z$ is the identity.

From now on, we will denote
\[
V_j = V_1(e_1 + \dots + e_j)
\]
for $j = 1, \dots, r$, which is thus a Jordan algebra with identity $e_1 + \dots + e_j$. We observe that
\[
V_1 \subset \dots \subset V_r.
\]
It follows from \cite{FarautKoranyi} that $D_j = V_j \cap D$ is a bounded symmetric domain of tube type whose associated symmetric cone is given by
\[
\Omega_j = \{ x^2 \mid x \in V_j, \; x^* = x \}.
\]
Let us denote by $L_j$ the identity component of the isotropy subgroup of automorphisms of $\Omega_j$ that fix $e_1 + \dots + e_j$. By the results from \cite{FarautKoranyi,KoranyiVagi} there is a unique polynomial $\Delta_j(z)$ in $V_j$ that is $L_j$-invariant and that satisfies
\begin{equation}
\label{eq:Deltaj}
\Delta_j(t_1 e_1 + \dots + t_j e_j) = t_1 \dots t_j.
\end{equation}
We will consider the polynomials $\Delta_1(z), \dots, \Delta_r(z)$ as defined on $\gp_+$ by orthogonal projection.

For the case $j = r$ we drop the index, and so we have $V = V_r$, $\Omega = \Omega_r$, $\Delta(z) = \Delta_r(z)$. We will also denote
\[
D_T = V \cap D,
\]
which is a bounded symmetric domain of tube type. We will call $D_T$ the tube type domain associated to $D$. In fact, $D_T$ is the largest tube type domain obtained as the intersection of $D$ with a complex subspace of $\gp_+$.

On the other hand, there is a joint Peirce decomposition associated to $e_1, \dots, e_r$ given by
\[
\gp_+ = \bigoplus_{0 \leq j,k \leq r} V_{jk},
\]
where we define
\begin{align*}
V_{jj} &= V_1(e_j), \quad \text{for $j=1, \dots, r$}, \\
V_{jk} &= V_{\frac{1}{2}}(e_j) \cap V_{\frac{1}{2}}(e_k), \quad \text{for $j,k=1, \dots, r$}, \\
V_{j0} = V_{0j} &= V_{\frac{1}{2}}(e_j) \cap \bigcap_{k\not=j} V_0(e_k), \quad \text{for $j=1, \dots, r$}, \\
V_{00} &= V_0(e_1) \cap \dots \cap V_0(e_r).
\end{align*}
We note that since $e_1, \dots, e_r$ is maximal, it turns out that $V_{00} = 0$. It is also well known that there exists a pair of integers $a > 0$ and $b \geq 0$ such that
\begin{align*}
\dim V_{jj} &= 1, \quad \text{for $j=1, \dots, r$},  \\
\dim V_{jk} &= a, \quad \text{for $j,k=1, \dots, r$},  \\
\dim V_{j0} &= b, \quad \text{for $j=1, \dots, r$}.
\end{align*}
The numbers $a,b$ are the characteristic multiplicities of $D$. We observe that $D$ is of tube type if and only if $b = 0$. If we denote by $n$ and $n_T$ the dimensions of $D$ and $D_T$, respectively, then we have
\[
n = r + \frac{r(r-1)}{2}a + rb, \quad n_T = r + \frac{r(r-1)}{2}a.
\]
The genus of the domain $D$ is defined by
\[
p = \frac{n + n_T}{r} = 2 + (r-1)a + b.
\]

Finally, it is also well known that there is a set of strongly orthogonal roots $\gamma_1, \dots, \gamma_r$ in the sense of Harish-Chandra such that, for every $j = 1, \dots, r$, the tripotent $e_j$ belongs to the root space $\gg_{\gamma_j}$. Without loss of generality, we can assume that $\gamma_1 > \dots > \gamma_r > 0$.

\section{Analysis on bounded symmetric domains}
\label{sec:AnalysisBSD}
With the setup provided by Section~\ref{sec:JordanBSD} we have the following integral formula (see \cite{FarautKoranyi,Helgason2,UpmeierBook})
\begin{align}\label{eq:int_gpplus}
&\int_{\gp_+} f(z) \dif z \\
&= c \int_{[0,\infty)^r} \int_K f\bigg(g \cdot \sum_{j=1}^{r} t_j e_j\bigg) \dif g
\prod_{1\leq j<k \leq r}|t_j^2 - t_k^2|^a \prod_{j=1}^{r} t_j^{2b+1}  \dif t_1 \dots \dif t_r  \notag
\end{align}
where $\dif z$ is the Lebesgue measure on $\gp_+$ corresponding to its Hermitian structure, $\dif g$ is the probability Haar measure of $K$ and $c$ is a positive constant whose value will not be used.

We also have the following integral formula.
\begin{align}\label{eq:int_cone}
&\int_{\Omega} f(x) \dif x \\
&= c' \int_{[0,\infty)^r} \int_L f\Big(s\cdot \sum_{j=1}^{r}x_j e_j\Big) \dif s
\prod_{1\leq j < k \leq r} |x_j^2 - x_k^2|^a \dif x_1 \dots \dif x_r,  \notag
\end{align}
where $\dif x$ is the Lebesgue measure on $\Omega$, $\dif g$ is the Haar measure of $L$ and $c'$ is a constant whose value we will not need.

On the other hand, for the Lebesgue measure on $\gp_+$, we have (see \cite{KoranyiInvBSD})
\[
\vol(D) = \pi^n \prod_{j = 1}^{r}
\frac{\Gamma(p - \frac{n}{r} - (j-1) \frac{a}{2})}{\Gamma(p - (j-1)\frac{a}{2})}.
\]
Hence, we consider the normalized Lebesgue measure
\[
\dif v(z) = \frac{1}{\vol(D)} \dif z
= \frac{1}{\pi^n} \prod_{j = 1}^{r}
\frac{\Gamma(p - (j-1)\frac{a}{2})}{\Gamma(p - \frac{n}{r} - (j-1) \frac{a}{2})} \dif z.
\]
The (weigthless) Bergman space is the closed subspace $\mA^2(D)$ of $L^2(D, v)$ that consists of holomorphic functions. The reproducing kernel of this space can be described as follows.

Let us define the Bergman endomorphisms
\begin{align*}
B(x,y) : \gp_+ &\rightarrow \gp_+ \\
B(x,y)z &= z - 2\{xybz\} + \{x\{yzy\}x\},
\end{align*}
for any $x,y \in \gp_+$. Then, the reproducing $K_D$ of $\mA^2(D)$ is given by (see \cite{LoosBSDJordan})
\begin{align*}
K_D : D \times D &\rightarrow \C   \\
K_D(z,w) &= \det(B(z,w))^{-1}.
\end{align*}	

Furthermore, as noted in \cite{FarautKoranyi} there is a real polynomial $h(z,w)$ on $\gp_+ \times \gp_+$, holomorphic in $z$ and anti-holomorphic in $w$, such that
\[
h(z,w)^p = \det(B(z,w)).
\]
It can also be described as the unique $K$-invariant polynomial such that
\begin{equation}
\label{eq:h(x,x)}
h(x,x) = \Delta(e-x^2) = \prod_{j=1}^{r} (1 - x_j^2)
\end{equation}
for every $x = x_1 e_1 + \dots + x_r e_r$ ($x_j \in \R$) and with the square $x^2$ computed in the Jordan algebra $V$. Note that we have used equation~\eqref{eq:Deltaj} with $j = r$.

Hence, the Bergman kernel can also be computed as
\[
K_D(z,w) = h(z,w)^{-p},
\]
for every $z,w \in D$.

The following weighted volume computation can also be obtained (see \cite{UpmeierBook})
\[
\int_D h(z,z)^{\lambda-p} \dif z
= \pi^n \prod_{j=1}^{r}\frac{\Gamma(\lambda - \frac{n}{r} - (j-1) \frac{a}{2})}{\Gamma(\lambda - (j-1)\frac{a}{2})},
\]
which holds for every $\lambda > p-1$. Hence, for every such $\lambda$ we consider the weighted measure
\[
\dif v_\lambda(z) =
\frac{1}{\pi^n}
\prod_{j=1}^{r}\frac{\Gamma(\lambda - (j-1)\frac{a}{2})}{\Gamma(\lambda - \frac{n}{r} - (j-1) \frac{a}{2})}
h(z,z)^{\lambda-p} \dif z,
\]
which thus satisfies $v_\lambda(D) = 1$.

For every $\lambda > p-1$, the weighted Bergman space $\mA^2_\lambda(D)$ corresponding to $\lambda$ is given by
\[
\mA^2_\lambda(D) = \{ f \in L^2(D, v_\lambda) \mid f \text{ is holomorphic} \}.
\]
This is a closed subspace of $L^2(D, v_\lambda)$ and a reproducing kernel space with Bergman kernel given by
\begin{align*}
K_{D,\lambda} : D \times D &\rightarrow \C \\
K_{D,\lambda}(z,w) &= h(z,w)^{-\lambda}.
\end{align*}
In particular, the corresponding Bergman projection $B_{D,\lambda} : L^2(D,v_\lambda) \rightarrow \mA^2_\lambda(D)$ satisfies
\begin{align*}
B_{D,\lambda}(f)(z) &= \int_D f(w) h(z,w)^{-\lambda} \dif v_\lambda(w) \\
&= \frac{1}{\pi^n}
\prod_{j=1}^{r}\frac{\Gamma(\lambda - (j-1)\frac{a}{2})}{\Gamma(\lambda - \frac{n}{r} - (j-1) \frac{a}{2})}
\int_D \frac{f(w)h(w,w)^{\lambda-p} \dif w}{h(z,w)^\lambda}.
\end{align*}

We note that $\mA^2_p(D) = \mA^2(D)$ is the Bergman space corresponding to the (weightless) Lebesgue measure.

For any function $\varphi \in L^\infty(D)$, the Toeplitz operator on $\mA^2_\lambda(D)$ with bounded symbol $\varphi$ is defined by
\begin{align*}
T_\varphi : \mA^2_\lambda(D) &\rightarrow \mA^2_\lambda(D) \\
T_\varphi(f) &= B_{D,\lambda}(\varphi f).
\end{align*}

On the other hand, for every $\lambda > p-1$ there is a unitary representation of the universal covering group $\widetilde{G}$ of $G$
\begin{align*}
\pi_\lambda : \widetilde{G} \times \mA^2_\lambda(D) &\rightarrow \mA^2_\lambda(D) \\
(\pi_\lambda(g)f)(z) &= J(g^{-1},z)^{\frac{\lambda}{p}} f(g^{-1}z),
\end{align*}
where $J(g,z)$ denotes the complex Jacobian of the transformation $g$ at the point $z$. These yield the holomorphic discrete series of $G$.

Since $K$ is the subgroup of linear unitary transformations on $\gp_+$ that belong to $G$ we have $J(g,z) \equiv 1$ for all $g \in K$. In particular, for every $\lambda > p-1$ the above yields a unitary representation of $K$ itself given by
\begin{align*}
\pi_\lambda|_K : K \times \mA^2_\lambda(D) &\rightarrow \mA^2_\lambda(D)   \\
(\pi_\lambda(g)f)(z) &= f(g^{-1} z).
\end{align*}

\section{Radial operators and the isotypic decomposition of $\pi_\lambda|_K$}
\label{sec:iso-K_and_radial-operators}
Let $\pi : H \rightarrow \U(\mH)$ be a unitary representation of a Lie group $H$. We recall that a bounded operator $T : \mH \rightarrow \mH$ is called $\pi$-intertwining if we have
\[
T\circ \pi(h) = \pi(h)\circ T
\]
for every $h \in H$. The algebra of $\pi$-intertwining bounded operators is denoted by $\End_H(\mH)$. This algebra is a fundamental invariant to understand the decomposition of $\mH$ into irreducible subspaces. In particular, we have the following well known result where we have considered the case of compact groups.

\begin{proposition}
	\label{prop:mult-free}
	The unitary representation $\pi$ of the group $H$ on the Hilbert space $\mH$ is multiplicity-free if and only if $\End_H(\mH)$ is commutative. For $H$ compact, this occurs exactly when $\mH$ can be decomposed as a direct sum of inequivalent irreducible $H$-invariant closed subspaces.
\end{proposition}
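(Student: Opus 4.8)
The plan is to fix, for an arbitrary group $H$, the intrinsic definition that $\pi$ is \emph{multiplicity-free} when $\mH$ contains no pair of nonzero orthogonal $H$-invariant closed subspaces carrying unitarily equivalent subrepresentations (equivalently, no subrepresentation equivalent to $\sigma\oplus\sigma$ with $\sigma$ nonzero), and to exploit the fact that $\End_H(\mH)=\pi(H)'$ is a von Neumann algebra, being the commutant of a set of operators. First I would set up the dictionary between the representation theory of $\pi$ and the internal structure of this algebra: the $H$-invariant closed subspaces of $\mH$ are exactly the ranges of the orthogonal projections in $\End_H(\mH)$, and two subrepresentations attached to projections $P,Q\in\End_H(\mH)$ are unitarily equivalent if and only if $P$ and $Q$ are Murray--von Neumann equivalent inside $\End_H(\mH)$, that is, $V^*V=P$ and $VV^*=Q$ for some partial isometry $V\in\End_H(\mH)$. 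An intertwining unitary between the two subspaces extends by zero to such a $V$, and conversely the source and range projections of any such $V$ produce equivalent subrepresentations.

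With this dictionary in hand the first biconditional becomes the operator-algebraic statement that a von Neumann algebra $\mathcal{N}$ is commutative if and only if it contains no two nonzero orthogonal Murray--von Neumann equivalent projections. The direction yielding multiplicity-freeness is a short computation: if $V\in\mathcal{N}$ has orthogonal source $P=V^*V$ and range $Q=VV^*$, then in the commutative case $V=PV=QV$, whence $V=QPV=0$ and $P=0$. The reverse direction is the main obstacle and the only place where commutativity is used essentially. Here I would argue that, since a von Neumann algebra is generated by its projections, noncommutativity forces a non-central projection $e\in\mathcal{N}$; choosing $u\in\mathcal{N}$ with $w:=(1-e)ue\neq 0$, which is possible precisely because $e$ is not central, gives an element satisfying $ew=0$ and $we=w$, and its polar decomposition produces a nonzero partial isometry $V\in\mathcal{N}$ whose source projection lies below $e$ and whose range projection lies below $1-e$, hence a pair of nonzero orthogonal equivalent projections. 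Through the dictionary this is a subrepresentation equivalent to $\sigma\oplus\sigma$, so $\pi$ is not multiplicity-free.

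For the final assertion I would specialize to $H$ compact and invoke the Peter--Weyl theorem, writing $\mH$ as the orthogonal Hilbert-space direct sum of its isotypic components, with the $\delta$-component unitarily equivalent to $H_\delta\otimes M_\delta$ for the irreducible model $H_\delta$ and a multiplicity space $M_\delta$ of dimension $m_\delta\in\{0,1,2,\dots\}\cup\{\infty\}$. Schur's lemma then identifies $\End_H(\mH)$ with the product $\prod_\delta B(M_\delta)$ acting on the multiplicity spaces, a von Neumann algebra that is commutative exactly when each factor is, i.e.\ when $m_\delta\le 1$ for every $\delta$. This last condition says precisely that $\mH$ is the orthogonal direct sum of at most one irreducible subspace per class, that is, a direct sum of pairwise inequivalent irreducible closed subspaces. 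Combining this with the first biconditional gives the stated chain of equivalences in the compact case, and simultaneously confirms that the intrinsic definition of multiplicity-free used above reduces, for compact $H$, to the familiar requirement that each irreducible occur at most once.
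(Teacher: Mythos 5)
The paper offers no proof of this proposition at all---it is presented as a ``well known result'' and immediately put to use---so there is no internal argument to compare yours against; what you have written is a complete and correct proof of the standard fact, along what is essentially the canonical route. Your dictionary between $H$-invariant closed subspaces and projections in the commutant $\pi(H)'$, and between unitary intertwiners of subrepresentations and partial isometries realizing Murray--von Neumann equivalence, is accurate; the computation $V=QV=QPV=0$ in the abelian direction is valid; and the converse correctly assembles three standard facts: a von Neumann algebra is generated by its projections, a projection $e$ fails to be central exactly when $(1-e)ue\neq 0$ for some $u$ in the algebra, and polar decomposition can be carried out inside the algebra, yielding nonzero equivalent subprojections of $e$ and $1-e$. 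The compact case via Peter--Weyl, Schur's lemma, and the identification $\End_H(\mH)\cong\prod_\delta B(M_\delta)$ with commutativity forced by $\dim M_\delta\le 1$ is likewise the standard argument and proves both directions of the final assertion. Two small points are worth tightening. First, your justification that $\End_H(\mH)$ is a von Neumann algebra should note that $\pi(H)$ is a \emph{self-adjoint} set (since $\pi(h)^*=\pi(h^{-1})$); the commutant of an arbitrary set of operators is WOT-closed but need not be $*$-closed. Second, for noncompact $H$ much of the literature the paper relies on (e.g.\ Kobayashi's multiplicity-free restriction theorems) \emph{defines} multiplicity-free as commutativity of $\End_H(\mH)$, under which reading the first biconditional is a tautology; by fixing the intrinsic ``no subrepresentation of the form $\sigma\oplus\sigma$'' definition you have in effect proved the equivalence of the two definitions, which is the only substantive reading of the statement and is exactly what the paper needs when it invokes the proposition to conclude Proposition~\ref{prop:En_K_commutative}.
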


By focusing our attention to the subgroup $K$ of $G$ and the holomorphic discrete series we obtain a special type of operators.

\begin{definition}
	\label{def:radial_operators}
	A bounded operator $T : \mA^2_\lambda(D) \rightarrow \mA^2_\lambda(D)$ is called radial if it is $\pi_\lambda|_K$-intertwining, i.e.~if $T$ satisfies
	\[
	T\circ \pi_\lambda(g) = \pi_\lambda(g)\circ T
	\]
	for every $g \in K$. The algebra of bounded radial operators on $\mA^2_\lambda(D)$ is denoted by $\End_K(\mA^2_\lambda(D))$.
\end{definition}

It is well known that the unitary representations $\pi_\lambda$ are irreducible highest weight representations of the group $\widetilde{G}$. The fact that $K$ is a maximal compact subgroup of $G$ thus implies that the restrictions $\pi_\lambda|_K$ are multiplicity-free: this follows, for example, from the results in \cite{Kobayashi} and \cite{OlafssonOrsted}.

As a consequence of the previous remarks we obtain the following.

\begin{proposition}
	\label{prop:En_K_commutative}
	For every $\lambda > p-1$, the algebra $\End_K(\mA^2_\lambda(D))$ is commutative.
\end{proposition}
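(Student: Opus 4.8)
The plan is to obtain the commutativity as an immediate consequence of Proposition~\ref{prop:mult-free}, once the representation $\pi_\lambda|_K$ is known to be multiplicity-free. By Definition~\ref{def:radial_operators}, the algebra $\End_K(\mA^2_\lambda(D))$ is exactly the intertwining algebra $\End_K(\mH)$ of the unitary representation $\pi = \pi_\lambda|_K$ of the compact group $K$ on the Hilbert space $\mH = \mA^2_\lambda(D)$. Since $K$ is compact, the compact-group case of Proposition~\ref{prop:mult-free} applies directly, so it suffices to prove that $\pi_\lambda|_K$ decomposes as a direct sum of pairwise inequivalent irreducible $K$-invariant closed subspaces.

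To establish this multiplicity-freeness I would use that each $\pi_\lambda$ is an irreducible highest weight representation of $\widetilde{G}$ and that $K$ is a maximal compact subgroup of $G$; under these hypotheses the restriction to $K$ is multiplicity-free, which is the content of \cite{Kobayashi} and \cite{OlafssonOrsted}. Alternatively, one can argue explicitly: for $g \in K$ one has $J(g,z) \equiv 1$, so $\pi_\lambda(g)$ acts by $f \mapsto f(g^{-1}\cdot)$ independently of $\lambda$. Restricting this action to the dense subspace of polynomials $\mathcal{P}(\gp_+) \subset \mA^2_\lambda(D)$, the decomposition of \cite{Schmid} expresses $\mathcal{P}(\gp_+)$ as a direct sum of pairwise inequivalent irreducible $K$-modules indexed by the signatures $m_1 \geq \dots \geq m_r \geq 0$. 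Passing to closures then transfers this multiplicity-free decomposition to all of $\mA^2_\lambda(D)$.

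With multiplicity-freeness in hand, Proposition~\ref{prop:mult-free} yields at once that $\End_K(\mA^2_\lambda(D))$ is commutative. The only substantive ingredient is therefore the multiplicity-freeness itself; everything else is a matter of matching definitions and invoking the compact-group case of Proposition~\ref{prop:mult-free}. Should one prefer the explicit route over citing \cite{Kobayashi,OlafssonOrsted} directly, the two points requiring care are the density of $\mathcal{P}(\gp_+)$ in $\mA^2_\lambda(D)$ and the observation that the $K$-action is the same for every weight $\lambda$, so that the multiplicity-free decomposition of the polynomial space carries over verbatim to the weighted Bergman space.
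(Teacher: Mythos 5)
Your proposal is correct and follows essentially the same route as the paper: the paper also deduces the result by combining the multiplicity-freeness of $\pi_\lambda|_K$ (citing \cite{Kobayashi} and \cite{OlafssonOrsted}, using that $\pi_\lambda$ is an irreducible highest weight representation and $K$ is maximal compact) with Proposition~\ref{prop:mult-free}. Your alternative explicit argument via Schmid's decomposition of $\mathcal{P}(\gp_+)$ is likewise consistent with the paper, which invokes exactly that decomposition immediately afterwards to describe the isotypic structure of $\mA^2_\lambda(D)$.
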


We now consider the corresponding invariance property for symbols and their associated Toeplitz operators.

\begin{definition}
	A bounded symbol $\psi \in L^\infty(D)$ is called radial if it is $K$-invariant. In other words, if for every $g \in K$ we have
	\[
	\psi(g z) = \psi(z)
	\]
	for almost every $z \in D$.
\end{definition}

We recall the following well known result. It follows from \cite{Helgason} by using the fact that the real span of $e_1, \dots, e_r$ is a maximal Abelian subspace of $\gp_+$.

\begin{proposition}
	\label{prop:KA-decomposition_and_K-invsymbols}
	For every $z \in D$ there exist $g \in K$ and $t \in [0,1)^r$ such that
	\[
	z = g \sum_{j=1}^{r} t_j e_j.
	\]
	In particular, a bounded symbol $\psi \in L^\infty(D)$ is radial if and only if we have
	\[
	\psi(z) = \psi\Big(\sum_{j=1}^{r} t_j e_j\Big),
	\]
	when the above relation holds between $z \in D$ and $t \in [0,1)^r$.
\end{proposition}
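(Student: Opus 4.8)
The plan is to deduce the statement from the polar (Cartan $KAK$) decomposition of the Riemannian symmetric space $D \cong G/K$, and then to make the resulting maximal flat explicit in terms of the tripotents $e_1, \dots, e_r$. First I would recall Helgason's polar decomposition: writing $o \in D$ for the origin of the Harish-Chandra realization (the point fixed by $K$) and $\ga \subset \gp$ for a maximal abelian subspace, every $z \in D$ can be written as $z = k \cdot \exp(H) \cdot o$ for some $k \in K$ and $H \in \ga$. Thus it suffices to identify the flat $\exp(\ga)\cdot o$ with the set $\{\sum_{j=1}^r s_j e_j : s_j \in (-1,1)\}$ and then to normalize the signs of the coordinates $s_j$.

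For the identification of the flat I would use the strongly orthogonal roots $\gamma_1, \dots, \gamma_r$ together with the tripotents $e_j \in \gg_{\gamma_j}$. Adding each $e_j \in \gp_+$ to its conjugate in $\gp_-$ produces real vectors $X_j \in \gp$ which commute pairwise (strong orthogonality forces the associated copies of $\mathfrak{su}(1,1)$ to commute), and whose real span is a maximal abelian subspace $\ga$ of $\gp$; under the real-linear identification $\gp \cong \gp_+$ furnished by the Harish-Chandra embedding, this $\ga$ corresponds exactly to the real span of $e_1,\dots,e_r$, which is the content of the maximality statement quoted before the proposition. A rank-one computation in each commuting factor then gives $\exp\big(\sum_j t_j X_j\big)\cdot o = \sum_j \tanh(t_j)\,e_j$, so that as $H$ ranges over $\ga$ the point $\exp(H)\cdot o$ ranges over precisely $\{\sum_j s_j e_j : s_j \in (-1,1)\}$, the coordinates $s_j = \tanh(t_j)$ being confined to $(-1,1)$; that these points lie in $D$ and exhaust the flat can also be read off from \eqref{eq:h(x,x)}, since $h(\sum_j s_j e_j, \sum_j s_j e_j) = \prod_j (1 - s_j^2)$.

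It then remains to arrange $s_j \geq 0$. Here I would invoke the Weyl group: the restricted root system of $D$ is of type $C_r$ (tube type) or $BC_r$, so the associated Weyl group, realized inside $K$ by the normalizer $N_K(\ga)$, contains all sign changes of the coordinates $s_1, \dots, s_r$. Choosing $w \in N_K(\ga)$ that sends $\sum_j s_j e_j$ to $\sum_j |s_j|\, e_j$ and absorbing it into $k$, I obtain $z = g\,\sum_{j=1}^r t_j e_j$ with $g \in K$ and $t_j = |s_j| \in [0,1)$, which is the first assertion.

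The ``in particular'' statement is then a formal consequence. If $\psi$ is radial and $z = g\sum_j t_j e_j$, then $\psi(z) = \psi(g \sum_j t_j e_j) = \psi(\sum_j t_j e_j)$ by $K$-invariance. Conversely, if $\psi(z) = \psi(\sum_j t_j e_j)$ whenever $z = g\sum_j t_j e_j$, then for any $g_0 \in K$ and any $z = g\sum_j t_j e_j$ we have $g_0 z = (g_0 g)\sum_j t_j e_j$, whence $\psi(g_0 z) = \psi(\sum_j t_j e_j) = \psi(z)$, proving $K$-invariance. The main obstacle is not in this last, purely formal step but in the geometric identification of the flat: one must correctly match the maximal abelian subspace $\ga \subset \gp$ with the Jordan-theoretic real span of the $e_j$ in $\gp_+$ and track the $\tanh$ parametrization, after which securing nonnegative coordinates via the Weyl group is routine.
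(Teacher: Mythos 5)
Your proof is correct and follows essentially the same route as the paper, which offers no written argument beyond citing Helgason's polar decomposition together with the fact that the real span of $e_1,\dots,e_r$ is a maximal Abelian subspace --- precisely the content you spell out via the $\tanh$ parametrization of the flat and the Weyl-group sign normalization. Your write-up is a faithful expansion of that citation rather than a different method.
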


It was observed in \cite{DOQJFA} the equivalence between the invariance of a symbol and of its associated Toeplitz operators. In particular, we have the following result which is a particular case of Corollary~3.3 and Theorem~5.2 from \cite{DOQJFA}.

\begin{proposition}[DQO \cite{DOQJFA}]
	\label{prop:intertwining_vs_invariance}
	For a given bounded symbol $\psi \in L^\infty(D)$ and a fixed $\lambda > p-1$ the following conditions are equivalent
	\begin{enumerate}
		\item The symbol $\psi$ is radial.
		\item The Toeplitz operator $T_\psi$ on $\mA^2_\lambda(D)$ is radial.
	\end{enumerate}
	Hence, if we denote by $\mA^R$ the space of bounded radial symbols and by $\mT^{(\lambda)}(\mA^R)$ the $C^*$-algebra generated by the Toeplitz operators on $\mA^2_\lambda(D)$ with bounded radial symbols, then we have
	\[
	\mT^{(\lambda)}(\mA^R) \subset \End_K(\mA^2_\lambda(D)).
	\]
	In particular, for every $\lambda > p-1$ the $C^*$-algebra $\mT^{(\lambda)}(\mA^R)$ is commutative.
\end{proposition}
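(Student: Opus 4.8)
The plan is to realize each Toeplitz operator as the composition $T_\psi = B_{D,\lambda}\circ M_\psi$, where $M_\psi$ denotes multiplication by $\psi$ on $L^2(D,v_\lambda)$, and then to track how these two factors interact with the $K$-action. First I would extend $\pi_\lambda|_K$ from $\mA^2_\lambda(D)$ to all of $L^2(D,v_\lambda)$ by the same formula $(\pi_\lambda(g)f)(z)=f(g^{-1}z)$; this is unitary because $v_\lambda$ is $K$-invariant (the weight $h(z,z)^{\lambda-p}$ is $K$-invariant and $K$ acts by unitary linear maps, so Lebesgue measure is preserved) and it leaves the closed subspace $\mA^2_\lambda(D)$ invariant. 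Consequently the orthogonal projection $B_{D,\lambda}$ commutes with every $\pi_\lambda(g)$, $g\in K$.

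For the implication (1) $\Rightarrow$ (2), a direct computation shows that $M_\psi$ commutes with $\pi_\lambda(g)$ precisely when $\psi(g^{-1}z)=\psi(z)$ almost everywhere, i.e.~when $\psi$ is radial: indeed $(\pi_\lambda(g)M_\psi\pi_\lambda(g)^{-1}f)(z)=\psi(g^{-1}z)f(z)$. If $\psi$ is radial, then both factors in $T_\psi=B_{D,\lambda}\circ M_\psi$ commute with $\pi_\lambda(g)$, and hence so does their composition restricted to $\mA^2_\lambda(D)$; thus $T_\psi$ is radial.

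For the converse (2) $\Rightarrow$ (1) I would use an averaging argument. Combining the two computations above gives $\pi_\lambda(g)\,T_\psi\,\pi_\lambda(g)^{-1}=T_{\psi_g}$, where $\psi_g(z)=\psi(g^{-1}z)$. Integrating over $K$ against the Haar measure and using linearity and continuity of the assignment $\psi\mapsto T_\psi$ yields $\int_K \pi_\lambda(g)\,T_\psi\,\pi_\lambda(g)^{-1}\dif g = T_{\psi^\sharp}$, where $\psi^\sharp(z)=\int_K\psi(g^{-1}z)\dif g$ is the $K$-average of $\psi$, which is radial by construction. If $T_\psi$ is radial, the integrand is identically $T_\psi$, so we obtain $T_\psi=T_{\psi^\sharp}$, that is, $T_{\psi-\psi^\sharp}=0$. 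The main obstacle is now to pass from the vanishing of a Toeplitz operator to the vanishing of its symbol. I expect to close this gap through the injectivity of the map $\psi\mapsto T_\psi$ on $L^\infty(D)$: testing $T_{\psi-\psi^\sharp}=0$ against the reproducing kernels $K_{D,\lambda}(\cdot,z)=h(\cdot,z)^{-\lambda}$ shows that the Berezin transform of $\psi-\psi^\sharp$ vanishes identically, and the injectivity of the Berezin transform on bounded symmetric domains (which I would cite, since it is the technical heart of Corollary~3.3 and Theorem~5.2 of \cite{DOQJFA}) forces $\psi=\psi^\sharp$ almost everywhere. Hence $\psi$ is radial.

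Finally, for the stated inclusion and the commutativity, I would observe that $\End_K(\mA^2_\lambda(D))$ is the commutant of the family of unitaries $\{\pi_\lambda(g):g\in K\}$ and is therefore a (von Neumann, in particular norm-closed and $*$-closed) $C^*$-subalgebra of the bounded operators on $\mA^2_\lambda(D)$. By the already established implication (1) $\Rightarrow$ (2), every Toeplitz operator with radial symbol lies in $\End_K(\mA^2_\lambda(D))$; since the latter is a $C^*$-algebra, the $C^*$-algebra $\mT^{(\lambda)}(\mA^R)$ generated by such operators is contained in it. The commutativity of $\mT^{(\lambda)}(\mA^R)$ then follows immediately from the commutativity of $\End_K(\mA^2_\lambda(D))$ established in Proposition~\ref{prop:En_K_commutative}.
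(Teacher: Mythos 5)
Your proof is correct, but note that the paper does not actually prove this proposition: it is imported as a particular case of Corollary~3.3 and Theorem~5.2 of \cite{DOQJFA}, so the comparison is with the cited source rather than with an internal argument. Your reconstruction --- factoring $T_\psi=B_{D,\lambda}\circ M_\psi$, extending $\pi_\lambda|_K$ unitarily to $L^2(D,v_\lambda)$ (legitimate because $K$ acts by linear unitaries and $h(z,z)$ is $K$-invariant, so $v_\lambda$ is preserved), deducing that $B_{D,\lambda}$ commutes with the extended action, and establishing the conjugation identity $\pi_\lambda(g)T_\psi\pi_\lambda(g)^{-1}=T_{\psi_g}$, $\psi_g(z)=\psi(g^{-1}z)$ --- is precisely the standard mechanism behind the cited results; likewise your closing argument (the commutant of a self-adjoint family of unitaries is a von Neumann, hence $C^*$, algebra, so it contains the $C^*$-algebra generated by the radial Toeplitz operators) combined with Proposition~\ref{prop:En_K_commutative} is exactly how the paper intends the last two claims to follow. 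Two refinements are worth recording. First, the averaging over $K$ is an avoidable detour that obliges you to justify a weak operator-valued integral: since radiality of $T_\psi$ gives $T_{\psi_g}=\pi_\lambda(g)T_\psi\pi_\lambda(g)^{-1}=T_\psi$, and so $T_{\psi_g-\psi}=0$, for each individual $g\in K$, injectivity already yields $\psi_g=\psi$ a.e.\ for every $g$, which is verbatim the paper's definition of a radial symbol. Second, the one ingredient you leave to a citation --- injectivity of $\varphi\mapsto T_\varphi$ on $L^\infty(D)$ --- does not require the injectivity of the Berezin transform and has an elementary proof on any bounded domain: if $T_\varphi=0$, then $\int_D \varphi\, p\,\overline{q}\dif v_\lambda=\langle T_\varphi p,q\rangle_\lambda=0$ for all holomorphic polynomials $p,q$; the products $p\overline{q}$ span all polynomials in $z$ and $\overline{z}$, which by Stone--Weierstrass are uniformly dense in $C(\overline{D})$, so the finite measure $\varphi\dif v_\lambda$ annihilates $C(\overline{D})$ and therefore $\varphi=0$ a.e. With that observation inserted, your argument is fully self-contained, which is more than the paper itself provides for this statement.
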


We now describe the multiplicity-free decomposition of $\mA^2_\lambda(D)$ with respect to the representation $\pi_\lambda|_K$.

Recall that the space of polynomials $\mP(\gp_+)$ is a dense $\pi_\lambda|_K$-invariant subspace of $\mA^2_\lambda(D)$ for every $\lambda > p - 1$. Hence, it is enough to describe the isotypic decomposition of the representation of $K$ on $\mP(\gp_+)$ (see \cite{Schmid} and \cite{UpmeierBook}).

Let us denote by $\rN^r$ the space of $r$-tuples of integers $(\alpha_1, \dots, \alpha_r)$ satisfying the condition $\alpha_1 \geq \dots \geq \alpha_r \geq 0$. Then, the highest weights of the irreducible $K$-submodules contained in $\mP(\gp_+)$ are precisely those of the form
\begin{equation}\label{eq:highestweights}
-\alpha_1 \gamma_1 - \dots - \alpha_r \gamma_r
\end{equation}
for some $\alpha \in \rN^r$. Furthermore, for each $\alpha \in \rN^r$ there is exactly one such $K$-submodule that we will denote by $\mP^\alpha(\gp_+)$. In particular, we have a $K$-invariant algebraic direct sum
\[
\mP(\gp_+) = \bigoplus_{\alpha \in \rN^r} \mP^\alpha(\gp_+),
\]
that induces the multiplicity-free isotypic decomposition for the representation $\pi_\lambda|_K$ given by
\[
\mA^2_\lambda(D) = \bigoplus_{\alpha \in \rN^r} \mP^\alpha(\gp_+).
\]
In particular, for every weight of the form \eqref{eq:highestweights} the corresponding space of highest vectors in $\mP(\gp_+)$ is $1$-dimensional.

The following is an immediate consequence of the previous discussion.

\begin{proposition}
	\label{prop:Schmid_diagonal}
	For every radial operator $T : \mA^2_\lambda(D) \rightarrow \mA^2_\lambda(D)$ and for every $\alpha \in \rN^r$ let us denote by $c_\alpha(T)$ the complex number such that
	\[
	T|_{\mP^\alpha(\gp_+)} = c_\alpha(T) I_{\mP^\alpha(\gp_+)}.
	\]
	Then, the map defined by
	\begin{align*}
	\End_{\pi_\lambda|_K}(\mA^2_\lambda(D)) &\rightarrow \ell_\infty(\rN^r)  \\
	T &\mapsto (c_\alpha(T))_{\alpha \in \rN^r},
	\end{align*}
	is an isomorphism of $C^*$-algebras. Furthermore, for every radial operator $T$ we have
	\[
	c_\alpha(T) = \frac{\left<T\phi,\phi\right>_\lambda}{\left<\phi,\phi\right>_\lambda},
	\]
	for every non-zero $\phi \in \mP^\alpha(\gp_+)$ and for every $\alpha \in \rN^r$.
\end{proposition}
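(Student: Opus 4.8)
The plan is to deduce the entire statement from Schur's lemma applied to the multiplicity-free decomposition
$\mA^2_\lambda(D) = \bigoplus_{\alpha \in \rN^r} \mP^\alpha(\gp_+)$
that has already been established. Since this is an orthogonal Hilbert-space direct sum of finite-dimensional irreducible $K$-modules, any bounded operator $T$ decomposes into blocks $T_{\beta\alpha} : \mP^\alpha(\gp_+) \to \mP^\beta(\gp_+)$. First I would note that the radiality of $T$, i.e.\ that it commutes with every $\pi_\lambda(g)$ for $g \in K$, forces each block to be $K$-equivariant. For $\alpha \neq \beta$ the modules $\mP^\alpha(\gp_+)$ and $\mP^\beta(\gp_+)$ carry inequivalent irreducible representations of $K$ (this is precisely the content of multiplicity-freeness), so Schur's lemma gives $T_{\beta\alpha} = 0$; for $\alpha = \beta$ the block is an equivariant endomorphism of an irreducible $K$-module and hence equals a scalar $c_\alpha(T) I_{\mP^\alpha(\gp_+)}$. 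This establishes the claimed scalar action of $T$ on each isotypic component.

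Next I would verify that the assignment $T \mapsto (c_\alpha(T))_{\alpha \in \rN^r}$ is a homomorphism of $C^*$-algebras. Linearity is immediate, and multiplicativity follows from $(ST)|_{\mP^\alpha(\gp_+)} = c_\alpha(S)c_\alpha(T) I_{\mP^\alpha(\gp_+)}$. Each $\mP^\alpha(\gp_+)$ is invariant under both $T$ and its adjoint $T^*$ (which is again radial, since $\pi_\lambda|_K$ is unitary so the commutant is $*$-closed), and the relation $(T|_{\mP^\alpha(\gp_+)})^* = T^*|_{\mP^\alpha(\gp_+)}$ then yields $c_\alpha(T^*) = \overline{c_\alpha(T)}$, so the map is $*$-preserving.

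To identify the image and conclude that the map is an isomorphism, I would compute the operator norm. Because the summands $\mP^\alpha(\gp_+)$ are mutually orthogonal and $T$ acts as the scalar $c_\alpha(T)$ on the $\alpha$-th one, one gets $\|T\| = \sup_{\alpha} |c_\alpha(T)|$; in particular the sequence lies in $\ell_\infty(\rN^r)$ and the map is isometric, hence injective. For surjectivity, given an arbitrary bounded sequence $(c_\alpha)_\alpha \in \ell_\infty(\rN^r)$ I would define $T$ to act as $c_\alpha I$ on each summand: this extends to a bounded operator of norm $\sup_\alpha |c_\alpha|$ that commutes with every $\pi_\lambda(g)$ because it is scalar on each $K$-invariant summand, and is therefore radial with the prescribed family of eigenvalues. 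Thus the map is a bijective isometric $*$-homomorphism, that is, a $C^*$-algebra isomorphism. The final formula is then immediate: for nonzero $\phi \in \mP^\alpha(\gp_+)$ the identity $T\phi = c_\alpha(T)\phi$ gives $\left<T\phi,\phi\right>_\lambda = c_\alpha(T)\left<\phi,\phi\right>_\lambda$, and dividing by $\left<\phi,\phi\right>_\lambda \neq 0$ finishes the proof.

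The argument has no genuine obstacle beyond the multiplicity-free input already in hand; the only points requiring a word of care are purely Hilbert-space bookkeeping for the infinite orthogonal sum, namely that the off-diagonal blocks vanish and that the norm identity $\|T\| = \sup_\alpha |c_\alpha(T)|$ holds across all summands, both of which are standard.
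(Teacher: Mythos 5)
Your proof is correct and follows exactly the route the paper intends: Proposition~\ref{prop:Schmid_diagonal} is stated there as an immediate consequence of the multiplicity-free isotypic decomposition $\mA^2_\lambda(D) = \bigoplus_{\alpha \in \rN^r} \mP^\alpha(\gp_+)$ together with Schur's lemma, which is precisely the argument you spell out. The additional details you supply (vanishing of off-diagonal blocks, the $*$-preservation via the commutant being $*$-closed, the isometry $\|T\| = \sup_\alpha |c_\alpha(T)|$, and surjectivity by defining diagonal operators) are the standard bookkeeping the paper leaves implicit.
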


On the other hand (see \cite{Johnson,FarautKoranyi,UpmeierBook}), it is well known that the polynomial $\Delta_j(z)$ on $\gp_+$ is a highest weight vector for $\pi_\lambda|_K$ with corresponding weight $-\gamma_1 - \dots - \gamma_j$, for every $j = 1, \dots, r$. In particular, the space of highest weight vectors is the free polynomial algebra $\C[\Delta_1(z), \dots, \Delta_r(z)]$. As a consequence of this, a highest weight vector corresponding to \eqref{eq:highestweights} is given by
\[
\Delta_\alpha(z) = \Delta_1(z)^{\alpha_1 - \alpha_2} \Delta_2(z)^{\alpha_2 - \alpha_3} \dots \Delta_r(z)^{\alpha_r},
\]
for every $\alpha \in \rN^r$. These are called the conical polynomials associated to $\pi_\lambda|_K$.

We have the following naturally defined polynomial
\[
\phi_\alpha(z) = \int_L \Delta_\alpha(sz) \dif s,
\]
for every $\alpha \in \rN^r$, which belongs to $\mP^\alpha(\gp_+)$ and that is ($L$-)spherical for the representation $\pi_\lambda|_K$ restricted to $\mP^\alpha(\gp_+)$.

The next result is proved, for example, in \cite{UpmeierBook} (see Theorem~2.8.10 therein).

\begin{lemma}
	\label{lem:K-int_z2}
	With the above notation we have
	\[
	\int_K |\phi_\alpha(k z)|^2 \dif k = \frac{1}{d_\alpha}\phi_\alpha(z^2),
	\]
	for every $\alpha \in \rN^r$ and $z \in V$, where $d_\alpha = \dim \mP^\alpha(\gp_+)$ and for $z^2$ computed using the Jordan algebra structure of $V$ described in Section~\ref{sec:JordanBSD}.
\end{lemma}

The previous results allow us to compute the coefficients described in Proposition~\ref{prop:Schmid_diagonal} for Toeplitz operators. The first step is given by the following result. We recall that for every $x$ in the cone $\Omega$ there exists a unique $y \in \Omega$ such that $y^2 = x$, and we denote $y = \sqrt{x}$.

\begin{theorem}
	\label{thm:RadialToepNormComputation}
	Let $D$ be a bounded symmetric domain and $\psi \in L^\infty(D)$ a radial symbol. Then, for every $\lambda > p-1$ and for every $\alpha \in \rN^r$ we have
	\begin{align*}
	\left<T_\psi \phi_\alpha, \phi_\alpha\right>_\lambda &=
	\left<\psi \phi_\alpha, \phi_\alpha\right>_\lambda \\
	&= C \int_{\Omega\cap(e-\Omega)}
	\psi(\sqrt{x}) \Delta_\alpha(x) \Delta(e-x)^{\lambda-p} \Delta(x)^b \dif x,
	\end{align*}
	for some constant $C$ independent of $\psi$ and $\alpha$.
\end{theorem}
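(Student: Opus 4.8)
The plan is to prove the two displayed equalities in turn. The first, $\langle T_\psi\phi_\alpha,\phi_\alpha\rangle_\lambda=\langle\psi\phi_\alpha,\phi_\alpha\rangle_\lambda$, is immediate from the definition $T_\psi(f)=B_{D,\lambda}(\psi f)$. Since $B_{D,\lambda}$ is the orthogonal (hence self-adjoint and idempotent) projection of $L^2(D,v_\lambda)$ onto $\mA^2_\lambda(D)$, and since $\phi_\alpha\in\mP^\alpha(\gp_+)\subset\mA^2_\lambda(D)$ so that $B_{D,\lambda}\phi_\alpha=\phi_\alpha$, we obtain
\[
\langle T_\psi\phi_\alpha,\phi_\alpha\rangle_\lambda=\langle B_{D,\lambda}(\psi\phi_\alpha),\phi_\alpha\rangle_\lambda=\langle\psi\phi_\alpha,B_{D,\lambda}\phi_\alpha\rangle_\lambda=\langle\psi\phi_\alpha,\phi_\alpha\rangle_\lambda .
\]
This reduces the problem to computing $\int_D\psi(z)|\phi_\alpha(z)|^2\,\dif v_\lambda(z)$.

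For the main equality I would expand $\dif v_\lambda$ and apply the integration formula~\eqref{eq:int_gpplus}. By Proposition~\ref{prop:KA-decomposition_and_K-invsymbols} the map $(g,t)\mapsto g\sum_j t_j e_j$ carries $K\times[0,1)^r$ onto $D$, so integrating a function supported on $D$ in \eqref{eq:int_gpplus} amounts to restricting the radial variables to $t\in[0,1)^r$. Writing $z=g\sum_j t_j e_j$, the radiality of $\psi$ gives $\psi(z)=\psi(\sum_j t_j e_j)$, independent of $g$, and the $K$-invariance of $h$ together with \eqref{eq:h(x,x)} gives $h(z,z)^{\lambda-p}=\prod_j(1-t_j^2)^{\lambda-p}$, again independent of $g$. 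Pulling both factors out of the inner integral over $K$ leaves $\int_K|\phi_\alpha(g\sum_j t_j e_j)|^2\,\dif g$, which by Lemma~\ref{lem:K-int_z2} equals $\frac{1}{d_\alpha}\phi_\alpha(\sum_j t_j^2 e_j)$, using that $(\sum_j t_j e_j)^2=\sum_j t_j^2 e_j$ in the Jordan algebra $V$.

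The core of the argument is then a change of variables $x_j=t_j^2$ that turns the resulting integral over the disk coordinates $[0,1)^r$ into one over cone coordinates, so that it may be read off from \eqref{eq:int_cone} as an integral over $\Omega\cap(e-\Omega)$. Under $x_j=t_j^2$ the factors become $\psi(\sum_j\sqrt{x_j}\,e_j)$, $\phi_\alpha(\sum_j x_j e_j)$ and $\prod_j(1-x_j)^{\lambda-p}$, the region $[0,1)^r$ is exactly the spectral description of $\Omega\cap(e-\Omega)$ (all Jordan eigenvalues in $[0,1)$), and the measure $\prod_{j<k}|t_j^2-t_k^2|^a\prod_j t_j^{2b+1}\,\dif t$ transforms into the weight of \eqref{eq:int_cone} times the extra factor $\prod_j x_j^b=\Delta(\sum_j x_j e_j)^b$ and a numerical Jacobian constant. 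To recover the conical polynomial $\Delta_\alpha$, I would write $\phi_\alpha(\sum_j x_j e_j)=\int_L\Delta_\alpha(s\sum_j x_j e_j)\,\dif s$ and note that the remaining factors $\psi(\sqrt{\,\cdot\,})$, $\Delta(e-\cdot)^{\lambda-p}$ and $\Delta(\cdot)^b$ are all $L$-invariant; inserting them inside the $\int_L$ and invoking \eqref{eq:int_cone} in reverse assembles the expression as $\int_{\Omega\cap(e-\Omega)}\psi(\sqrt x)\Delta_\alpha(x)\Delta(e-x)^{\lambda-p}\Delta(x)^b\,\dif x$, where $\Delta(e-x)=\prod_j(1-x_j)$ and $\Delta(x)=\prod_j x_j$ on diagonal elements by \eqref{eq:Deltaj}. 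Gathering the accumulated numerical constants then produces the asserted formula with a single factor $C$ in front of the cone integral.

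I expect two points to require the most care. The first is the exact matching of the two integration formulas under $x_j=t_j^2$: one must verify that the Vandermonde factor $\prod_{j<k}|t_j^2-t_k^2|^a$ and the Jacobian combine with $\prod_j t_j^{2b+1}$ to reproduce precisely the weight of \eqref{eq:int_cone} together with the extra factor $\Delta(x)^b$, and that the spectral parametrization of the cone identifies $[0,1)^r$ with $\Omega\cap(e-\Omega)$. The second is checking that the constant $C$ does not depend on $\alpha$: the factor $d_\alpha^{-1}$ produced by Lemma~\ref{lem:K-int_z2} must be reconciled against the normalizations introduced in passing from the $K$-average to the $L$-average of \eqref{eq:int_cone}. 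A final, more routine, point is the justification that $\psi(\sqrt{\,\cdot\,})$, $\Delta$ and $\Delta(e-\cdot)$ are genuinely $L$-invariant—which is what legitimizes running \eqref{eq:int_cone} backwards and replacing $\phi_\alpha$ by $\Delta_\alpha$—and this rests on $L\subset K$ together with $L$ acting by automorphisms of the Jordan algebra $V$ fixing $e$, so that it commutes with the square root and preserves $\Delta$.
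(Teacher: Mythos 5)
Your proposal reproduces the paper's own proof essentially step for step: expand $\dif v_\lambda$, apply \eqref{eq:int_gpplus} restricted to $[0,1)^r$, use the $K$-invariance of $\psi$ and $h$ together with Lemma~\ref{lem:K-int_z2} and \eqref{eq:h(x,x)}, substitute $x_j=t_j^2$, invoke \eqref{eq:Deltaj} to produce $\Delta(x)^b$, write $\phi_\alpha$ as the $L$-average of $\Delta_\alpha$, and run \eqref{eq:int_cone} backwards to land on $\Omega\cap(e-\Omega)$. The one caveat you flag is real but harmless: the factor $d_\alpha^{-1}$ from Lemma~\ref{lem:K-int_z2} is \emph{not} reconciled away --- the paper's proof also carries $cc_\lambda/d_\alpha$ to the very end --- so the constant this argument actually produces is proportional to $1/d_\alpha$ and does depend on $\alpha$; what the argument truly establishes, and all that Theorems~\ref{thm:Schmid_diagonal} and~\ref{thm:Schmid_diagonal_Lebesgue} ever use, is independence of $\psi$, since downstream only ratios with the same fixed $\alpha$ occur.
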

\begin{proof}
	We fix $\psi$, $\lambda$ and $\alpha$ to compute as follows.
	
	First, we apply the integral formula \eqref{eq:int_gpplus} to obtain
	\begin{multline*}
	\left<\psi \phi_\alpha, \phi_\alpha\right>_\lambda
	= c_\lambda \int_D \psi(z)|\phi_\alpha(z)|^2 h(z,z)^{\lambda-p} \dif z  \\
	= c c_\lambda \int_{[0,1)^r} \int_K \psi\Big(g\cdot \sum_{j=1}^{r} t_j e_j\Big)
	\Big|\phi_\alpha\Big(g\cdot \sum_{j=1}^{r} t_j e_j\Big)\Big|^2
	h\Big(g\cdot \sum_{j=1}^{r} t_j e_j,g\cdot \sum_{j=1}^{r} t_j e_j\Big)^{\lambda-p}
	\dif g  \\
	\times
	\prod_{1\leq j<k \leq r}|t_j^2 - t_k^2|^a \prod_{j=1}^{r} t_j^{2b+1}  \dif t_1 \dots \dif t_r,
	\end{multline*}
	where we take
	\[
	c_\lambda = \frac{1}{\pi^n}
	\prod_{j=1}^{r}\frac{\Gamma(\lambda - (j-1)\frac{a}{2})}{\Gamma(\lambda - \frac{n}{r} - (j-1) \frac{a}{2})}
	\]
	Note that we have restricted the integral to $[0,1)^r$ in order to integrate over $D$. Since $\psi,h$ are $K$-invariant and by Lemma~\ref{lem:K-int_z2} we now obtain
	\begin{align*}
	\left<\psi \phi_\alpha, \phi_\alpha\right>_\lambda &= \\
	= & \frac{cc_\lambda}{d_\alpha} \int_{[0,1)^r} \psi\Big(\sum_{j=1}^{r} t_j e_j\Big)
	\phi_\alpha\Big(\sum_{j=1}^{r} t_j^2 e_j\Big)
	h\Big(\sum_{j=1}^{r} t_j e_j, \sum_{j=1}^{r} t_j e_j\Big)^{\lambda-p}  \\
	& \times
	\prod_{1\leq j<k \leq r}|t_j^2 - t_k^2|^a \prod_{j=1}^{r} t_j^{2b+1}  \dif t_1 \dots \dif t_r \\
	\intertext{and using equation~\eqref{eq:h(x,x)} we get}
	= & \frac{cc_\lambda}{d_\alpha} \int_{[0,1)^r} \psi\Big(\sum_{j=1}^{r} t_j e_j\Big)
	\phi_\alpha\Big(\sum_{j=1}^{r} t_j^2 e_j\Big)
	\Delta\Big(e - \sum_{j=1}^{r}t_j^2 e_j\Big)^{\lambda-p}  \\
	& \times
	\prod_{1\leq j<k \leq r}|t_j^2 - t_k^2|^a \prod_{j=1}^{r} t_j^{2b+1}  \dif t_1 \dots \dif t_r \\	
	= & \frac{cc_\lambda}{d_\alpha} \int_{[0,1)^r} \psi\Big(\sum_{j=1}^{r} \sqrt{x_j} e_j\Big)
	\phi_\alpha\Big(\sum_{j=1}^{r} x_j e_j\Big)
	\Delta\Big(e - \sum_{j=1}^{r}x_j e_j\Big)^{\lambda-p}   \\
	& \times
	\prod_{1\leq j<k \leq r}|x_j - x_k|^a \prod_{j=1}^{r} x_j^b  \dif x_1 \dots \dif x_r \\	
	\intertext{using equation~\eqref{eq:Deltaj} we get}
	= & \frac{cc_\lambda}{d_\alpha} \int_{[0,1)^r} \psi\Big(\sum_{j=1}^{r} \sqrt{x_j} e_j\Big)
	\phi_\alpha\Big(\sum_{j=1}^{r} x_j e_j\Big)	
	\Delta\Big(e - \sum_{j=1}^{r}x_j e_j\Big)^{\lambda-p} \\
	& \times
	\Delta\Big(\sum_{j=1}^{r}x_j e_j\Big)^b
	\prod_{1\leq j<k \leq r}|x_j - x_k|^a  \dif x_1 \dots \dif x_r \\		
	\intertext{and then applying the definition of $\varphi_\alpha$ we obtain}
	= & \frac{cc_\lambda}{d_\alpha} \int_{[0,1)^r} \psi\Big(\sum_{j=1}^{r} \sqrt{x_j} e_j\Big)
	\Delta\Big(e - \sum_{j=1}^{r}x_j e_j\Big)^{\lambda-p}
	\Delta\Big(\sum_{j=1}^{r}x_j e_j\Big)^b  \\
	& \times
	\int_L \Delta_\alpha\Big(s\cdot \sum_{j=1}^{r} x_j e_j\Big) \dif s
	\prod_{1\leq j<k \leq r}|x_j - x_k|^a  \dif x_1 \dots \dif x_r. \\	
	\end{align*}
	
	Since $\psi, \Delta$ and $e$ are $L$-invariant, we are now in position to apply equation~\eqref{eq:int_cone}. To do so, we note that integration over $[0,1)^r$ for
	\[
	x = \sum_{J=1}^{r} x_j e_j,
	\]
	corresponds to the set $\Omega \cap (e-\Omega)$. Hence, we obtain
	\[
	\left<\psi \phi_\alpha, \phi_\alpha\right>_\lambda
	= C \int_{\Omega\cap(e-\Omega)}
	\psi(\sqrt{x}) \Delta_\alpha(x) \Delta(e-x)^{\lambda-p} \Delta(x)^b \dif x,
	\]
	where $C$ is a constant that is independent of $\psi$ and $\alpha$. Note that we have used
	\[
	\sqrt{\sum_{j=1}^{r} x_j e_j} = \sum_{j=1}^{r} \sqrt{x_j} e_j,
	\]
	which follows from the mutual orthogonality of $e_1, \dots, e_r$.
\end{proof}

As a consequence of the proof of Theorem~\ref{thm:RadialToepNormComputation} we have the following.

\begin{corollary}
	\label{cor:RadialToepNormComputationCoord}
	With the assumptions of Theorem~\ref{thm:RadialToepNormComputation} we have for every $\lambda > p-1$ and $\alpha \in \rN^r$
	\begin{multline*}
	\left<T_\psi \phi_\alpha, \phi_\alpha\right>_\lambda =
	\left<\psi \phi_\alpha, \phi_\alpha\right>_\lambda \\
	= C \int_{[0,1)^r} \psi\Big(\sum_{j=1}^{r} \sqrt{x_j} e_j\Big)
	\Delta_\alpha\Big(\sum_{j=1}^{r} x_j e_j\Big)
	\Delta\Big(\sum_{j=1}^{r}(1-x_j) e_j\Big)^{\lambda-p}
	\Delta\Big(\sum_{j=1}^{r}x_j e_j\Big)^b  \\
	\times
	\prod_{1\leq j<k \leq r}|x_j - x_k|^a  \dif x_1 \dots \dif x_r,
	\end{multline*}
	for some constant $C$ independent of $\psi$ and $\alpha$.
\end{corollary}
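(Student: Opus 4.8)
The plan is to reuse, essentially verbatim, the chain of equalities already established in the proof of Theorem~\ref{thm:RadialToepNormComputation}, but to stop one step \emph{before} the integral formula \eqref{eq:int_cone} is invoked. Indeed, at the stage reached just after \eqref{eq:Deltaj} has been used to rewrite $\prod_j x_j^b$ as $\Delta\big(\sum_j x_j e_j\big)^b$, that proof has already produced
\begin{multline*}
\langle\psi\phi_\alpha,\phi_\alpha\rangle_\lambda
= \frac{cc_\lambda}{d_\alpha}\int_{[0,1)^r}\psi\Big(\sum_{j=1}^{r}\sqrt{x_j}\,e_j\Big)\phi_\alpha\Big(\sum_{j=1}^{r} x_j e_j\Big) \\
\times \Delta\Big(e-\sum_{j=1}^{r} x_j e_j\Big)^{\lambda-p}\Delta\Big(\sum_{j=1}^{r} x_j e_j\Big)^b\prod_{1\le j<k\le r}|x_j-x_k|^a\,\dif x_1\dots\dif x_r.
\end{multline*}
Since $e-\sum_j x_j e_j=\sum_j(1-x_j)e_j$, this is already of exactly the shape claimed in the corollary, with the constant $C$ absorbing $cc_\lambda/d_\alpha$, \emph{except} that here the spherical polynomial $\phi_\alpha$ appears in the spot where the corollary writes the conical polynomial $\Delta_\alpha$. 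Thus the whole content of the statement is the passage from $\phi_\alpha\big(\sum_j x_j e_j\big)$ to $\Delta_\alpha\big(\sum_j x_j e_j\big)$ inside this integral over $[0,1)^r$.

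To carry out that passage I would expand $\phi_\alpha\big(\sum_j x_j e_j\big)=\int_L\Delta_\alpha\big(s\cdot\sum_j x_j e_j\big)\,\dif s$, interchange the $L$- and $x$-integrations (legitimate by Fubini, both being over finite-measure sets), and try to show that for each fixed $s\in L$ the inner $x$-integral is unchanged when $\Delta_\alpha\big(s\cdot\sum_j x_j e_j\big)$ is replaced by $\Delta_\alpha\big(\sum_j x_j e_j\big)$; as $\int_L\dif s=1$, this would give the result. The invariances available are precisely those already exploited in the theorem: $\psi(\sqrt{\cdot})$, $\Delta(e-\cdot)$ and $\Delta(\cdot)$ are $L$-invariant, and the entire weight $\prod_{j<k}|x_j-x_k|^a\,\dif x$ together with the $\psi$- and $\Delta$-factors is symmetric under permutations of the $x_j$, because such permutations of $e_1,\dots,e_r$ are realized by elements of $N_K(\ga)$ (the Weyl group of the restricted root system) and $\psi,h,\Delta$ are $K$-invariant.

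The step I expect to be the main obstacle is exactly this replacement of $\phi_\alpha$ by $\Delta_\alpha$. The symmetry of the weight only licenses symmetrization of $\Delta_\alpha\big(\sum_j x_j e_j\big)=\prod_j x_j^{\alpha_j}$ over the Weyl group $S_r$, whereas $\phi_\alpha$ is the average over the full group $L$, and $L\supsetneq S_r$ carries continuous directions that move $\sum_j x_j e_j$ off the diagonal flat $\ga$. One therefore has to argue that, when paired against $\prod_{j<k}|x_j-x_k|^a$ times the $\Delta$-weight on $[0,1)^r$, the terms of $\phi_\alpha$ lying below the $S_r$-symmetrization of $\Delta_\alpha$ contribute nothing. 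I would attack this by writing both $\phi_\alpha$ and $\Delta_\alpha$ in the monomial-symmetric basis on $\ga$ and testing the lower terms against the weight. This is the delicate point on which I would concentrate the verification, and as a consistency check I would reconcile the two coordinate descriptions with the folded form of Theorem~\ref{thm:RadialToepNormComputation}, where $\Delta_\alpha$ enters most transparently through the cone integral over $\Omega\cap(e-\Omega)$.
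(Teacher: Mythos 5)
Your reconstruction of the intended route is exactly the paper's: the corollary is meant to be read off from the proof of Theorem~\ref{thm:RadialToepNormComputation} by stopping one step before formula~\eqref{eq:int_cone} is invoked, and you are right that what that chain of equalities actually produces at that stage is the integral containing $\phi_\alpha\big(\sum_j x_j e_j\big)$, not $\Delta_\alpha\big(\sum_j x_j e_j\big)$. You have therefore isolated the only nontrivial content of the statement, namely the substitution of the conical polynomial for the spherical one on the diagonal; the paper makes this substitution silently (its entire proof is the phrase ``as a consequence of the proof of Theorem~\ref{thm:RadialToepNormComputation}''), whereas you correctly flag it as the step requiring justification.

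That step, however, cannot be justified, so the completion you propose must fail --- and the defect lies in the printed statement, not in your reading of it. By Proposition~\ref{prop:KA-decomposition_and_K-invsymbols}, as $\psi$ ranges over bounded radial symbols, the factor $\psi\big(\sum_j\sqrt{x_j}\,e_j\big)$ ranges over essentially arbitrary bounded symmetric functions of $(x_1,\dots,x_r)$. Hence an identity
\begin{equation*}
\int_{[0,1)^r}\psi\Big(\sum_{j=1}^{r}\sqrt{x_j}\,e_j\Big)\,\phi_\alpha\Big(\sum_{j=1}^{r} x_j e_j\Big)W(x)\dif x
= C\int_{[0,1)^r}\psi\Big(\sum_{j=1}^{r}\sqrt{x_j}\,e_j\Big)\prod_{j=1}^{r} x_j^{\alpha_j}\,W(x)\dif x,
\end{equation*}
where $W(x)=\prod_{j=1}^{r} x_j^{b}(1-x_j)^{\lambda-p}\prod_{1\le j<k\le r}|x_j-x_k|^{a}$ and $C$ is independent of $\psi$, would force the pointwise proportionality of $\phi_\alpha\big(\sum_j x_je_j\big)$ and the symmetrized monomial $\frac{1}{r!}\sum_{\sigma\in S_r}\prod_j x_{\sigma(j)}^{\alpha_j}$ as polynomials. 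This fails in every domain of rank $r\ge 2$, already for $\alpha=(2,0,\dots,0)$. Concretely, for $D^I_{2,2}$ (so $a=2$, with $L$ acting by $Z\mapsto AZA^*$, $A\in\U(2)$) one computes $\phi_{(2,0)}(Z)=\int_{\U(2)}\big((AZA^*)_{11}\big)^2\dif A=\frac16\big[(\operatorname{tr}Z)^2+\operatorname{tr}(Z^2)\big]$, so that $\phi_{(2,0)}(x_1e_1+x_2e_2)=\frac13\big(x_1^2+x_1x_2+x_2^2\big)$, which is not proportional to $\frac12\big(x_1^2+x_2^2\big)$; for general $a$ the offending coefficient of $x_1x_2$ is $\frac{2a}{a+2}>0$, so nothing improves. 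Thus the ``lower-order terms'' you would need to kill do not integrate to zero against the weight, and no Weyl-group or $L$-invariance argument can make them do so: the Weyl group only gives $S_r$-symmetry, exactly as you suspected, and the gap between the $S_r$-average and the $L$-average is real. Your own suggested consistency check settles the matter: unfolding the cone integral of Theorem~\ref{thm:Schmid_diagonal} by \eqref{eq:int_cone} reintroduces the $L$-average $\int_L\Delta_\alpha\big(s\cdot\sum_jx_je_j\big)\dif s=\phi_\alpha\big(\sum_jx_je_j\big)$, never the bare monomial. What the chain of equalities actually proves --- and what you should record as the corollary --- is the displayed formula with $\phi_\alpha\big(\sum_jx_je_j\big)$ in place of $\Delta_\alpha\big(\sum_jx_je_j\big)$; the printed version (and, downstream, the monomial weights $\prod_jx_j^{\alpha_j+b}$ in Theorem~\ref{thm:Schmid_diagonal_Lebesgue}) agrees with it only in the cases where the two polynomials coincide, e.g.\ $r=1$, $\alpha\in\{0,1\}^r$, or constant $\alpha$.
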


A direct application of Theorem~\ref{thm:RadialToepNormComputation} yields the following expression of the coefficients in Proposition~\ref{prop:Schmid_diagonal} for radial Toeplitz operators in terms of the polynomials $\Delta_\alpha(z)$ and $\Delta_j(z)$. We recall that for our previous notation we have $\Delta(z) = \Delta_r(z)$ for a domain of rank $r$.

\begin{theorem}
	\label{thm:Schmid_diagonal}
	Let $D$ be a bounded symmetric domain and $\psi \in L^\infty(D)$ a radial symbol. Then, for every $\lambda > p-1$ and for every $\alpha \in \rN^r$ the coefficients $c_\alpha(T_\psi)$ from Proposition~\ref{prop:Schmid_diagonal} satisfy
	\begin{align*}
	c_\alpha(T_\psi)
	&= \frac{\left<T_\psi \phi_\alpha,\phi_\alpha\right>_\lambda}
	{\left<\phi_\alpha,\phi_\alpha\right>_\lambda}
	= \frac{\displaystyle\int_{\Omega\cap(e-\Omega)}
		\psi(\sqrt{x}) \Delta_\alpha(x) \Delta(e-x)^{\lambda-p} \Delta(x)^b \dif x}
	{\displaystyle\int_{\Omega\cap(e-\Omega)}
		\Delta_\alpha(x) \Delta(e-x)^{\lambda-p} \Delta(x)^b \dif x}   \\
	&= \frac{\displaystyle\int_{\Omega\cap(e-\Omega)}
		\psi(\sqrt{x})
		\Delta_1(x)^{\alpha_1 - \alpha_2} \Delta_2(x)^{\alpha_2 - \alpha_3} \dots
		\Delta_r(x)^{\alpha_r+b} \Delta_r(e-x)^{\lambda-p} \dif x}
	{\displaystyle\int_{\Omega\cap(e-\Omega)}
		\Delta_1(x)^{\alpha_1 - \alpha_2} \Delta_2(x)^{\alpha_2 - \alpha_3} \dots
		\Delta_r(x)^{\alpha_r+b} \Delta_r(e-x)^{\lambda-p} \dif x}.	
	\end{align*}
\end{theorem}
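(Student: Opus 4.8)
The plan is to obtain both displayed equalities by combining results already in hand, since the statement is essentially a repackaging of Proposition~\ref{prop:Schmid_diagonal} and Theorem~\ref{thm:RadialToepNormComputation}.

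First I would establish the leftmost equality. By Proposition~\ref{prop:intertwining_vs_invariance} the radiality of the symbol $\psi$ forces the Toeplitz operator $T_\psi$ on $\mA^2_\lambda(D)$ to be radial, so Proposition~\ref{prop:Schmid_diagonal} applies to it. The polynomial $\phi_\alpha$ defined just before Lemma~\ref{lem:K-int_z2} is a nonzero element of $\mP^\alpha(\gp_+)$ (it is the $L$-spherical polynomial, normalized so that $\phi_\alpha(e)=1$), so substituting $\phi = \phi_\alpha$ into the formula $c_\alpha(T) = \langle T\phi,\phi\rangle_\lambda / \langle \phi,\phi\rangle_\lambda$ of that proposition yields $c_\alpha(T_\psi) = \langle T_\psi\phi_\alpha,\phi_\alpha\rangle_\lambda / \langle \phi_\alpha,\phi_\alpha\rangle_\lambda$.

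Next I would treat the numerator and denominator separately using Theorem~\ref{thm:RadialToepNormComputation}. The numerator $\langle T_\psi\phi_\alpha,\phi_\alpha\rangle_\lambda$ is precisely the quantity computed in that theorem. For the denominator I would apply the \emph{same} theorem to the constant symbol $\psi \equiv 1$, which is bounded and trivially $K$-invariant, giving $\langle \phi_\alpha,\phi_\alpha\rangle_\lambda = C \int_{\Omega\cap(e-\Omega)} \Delta_\alpha(x)\,\Delta(e-x)^{\lambda-p}\,\Delta(x)^b\,\dif x$. Crucially, Theorem~\ref{thm:RadialToepNormComputation} asserts that the constant $C$ is independent of the symbol, so the same $C$ appears in numerator and denominator and cancels in the quotient, producing the middle expression. (As a consistency check, $\psi\equiv 1$ gives $T_1 = I$ and hence $c_\alpha = 1$, which matches the numerator and denominator coinciding in that case.)

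Finally I would pass from the middle expression to the rightmost one purely by algebra on the integrands. Substituting the factorization $\Delta_\alpha(x) = \Delta_1(x)^{\alpha_1-\alpha_2}\cdots\Delta_r(x)^{\alpha_r}$ recorded before the statement, and recalling the convention $\Delta = \Delta_r$, I would merge the factor $\Delta_r(x)^{\alpha_r}$ with $\Delta(x)^b = \Delta_r(x)^b$ to obtain the exponent $\alpha_r + b$, and rewrite $\Delta(e-x) = \Delta_r(e-x)$. This is a term-by-term rewriting inside both integrals and changes nothing about their values. I do not expect any genuine obstacle here; the only points demanding care are verifying that $\phi_\alpha \neq 0$ so that the ratio in Proposition~\ref{prop:Schmid_diagonal} is legitimate, and confirming that the constant in Theorem~\ref{thm:RadialToepNormComputation} is truly symbol-independent, which is exactly what guarantees its cancellation in the quotient.
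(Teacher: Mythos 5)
Your proposal is correct and follows exactly the route the paper intends: the theorem is stated as ``a direct application of Theorem~\ref{thm:RadialToepNormComputation}'', namely invoking Proposition~\ref{prop:intertwining_vs_invariance} to get radiality of $T_\psi$, Proposition~\ref{prop:Schmid_diagonal} with the nonzero spherical vector $\phi_\alpha$, and Theorem~\ref{thm:RadialToepNormComputation} for the numerator and (with $\psi\equiv 1$) for the denominator, so that the symbol-independent constant $C$ cancels, followed by the algebraic substitution $\Delta_\alpha = \Delta_1^{\alpha_1-\alpha_2}\cdots\Delta_r^{\alpha_r}$ and $\Delta=\Delta_r$. Your added checks (that $\phi_\alpha(e)=1$ so $\phi_\alpha\neq 0$, and the consistency check $T_1=I$) are correct and only make explicit what the paper leaves implicit.
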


We can now apply Corollary~\ref{cor:RadialToepNormComputationCoord} and use \eqref{eq:Deltaj} to write down the formulas from Theorem~\ref{thm:Schmid_diagonal} in terms of Lebesgue integrals over $[0,1)^r$ to obtain the following result.

\begin{theorem}
	\label{thm:Schmid_diagonal_Lebesgue}
	Let $D$ be a bounded symmetric domain and $\psi \in L^\infty(D)$ a radial symbol. Then, for every $\lambda > p-1$ and for every $\alpha \in \rN^r$ the coefficients $c_\alpha(T_\psi)$ from Proposition~\ref{prop:Schmid_diagonal} satisfy
	\begin{multline*}
	c_\alpha(T_\psi)
	= \frac{\left<T_\psi \phi_\alpha,\phi_\alpha\right>_\lambda}
	{\left<\phi_\alpha,\phi_\alpha\right>_\lambda}  = \\
	\frac{\displaystyle\int_{[0,1)^r}
		\psi\Big(\sum_{j=1}^{r} \sqrt{x_j} e_j\Big)
		\prod_{j=1}^{r} x_j^{\alpha_j+b} \prod_{j=1}^{r}(1-x_j)^{\lambda-p}
		\prod_{1\leq j<k \leq r}|x_j - x_k|^a \dif x_1 \dots \dif x_r}
	{\displaystyle\int_{[0,1)^r}
		\prod_{j=1}^{r} x_j^{\alpha_j+b} \prod_{j=1}^{r}(1-x_j)^{\lambda-p}
		\prod_{1\leq j<k \leq r}|x_j - x_k|^a \dif x_1 \dots \dif x_r}.	
	\end{multline*}
\end{theorem}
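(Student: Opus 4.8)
The plan is to start from Corollary~\ref{cor:RadialToepNormComputationCoord}, which already expresses the numerator $\left<T_\psi \phi_\alpha, \phi_\alpha\right>_\lambda = \left<\psi \phi_\alpha, \phi_\alpha\right>_\lambda$ as a Lebesgue integral over $[0,1)^r$ involving the polynomials $\Delta_\alpha$ and $\Delta$ evaluated at the diagonal points $\sum_j x_j e_j$. The only remaining work is to rewrite those polynomials as explicit monomials in the coordinates $x_1, \dots, x_r$ by means of \eqref{eq:Deltaj}, and then to identify the denominator $\left<\phi_\alpha, \phi_\alpha\right>_\lambda$ as the very same integral taken with $\psi \equiv 1$.

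First I would evaluate the conical polynomial $\Delta_\alpha$ on the diagonal. Since the orthogonal projection onto $V_j$ sends $\sum_{k=1}^r x_k e_k$ to $\sum_{k=1}^{j} x_k e_k$, equation~\eqref{eq:Deltaj} gives $\Delta_j\big(\sum_{k=1}^r x_k e_k\big) = x_1 \cdots x_j$. Plugging this into the definition $\Delta_\alpha = \Delta_1^{\alpha_1-\alpha_2} \cdots \Delta_r^{\alpha_r}$ and setting $\alpha_{r+1} := 0$, the exponent of each $x_m$ collects telescopically as $\sum_{j \geq m}(\alpha_j - \alpha_{j+1}) = \alpha_m$, so that $\Delta_\alpha\big(\sum_{j=1}^r x_j e_j\big) = \prod_{j=1}^r x_j^{\alpha_j}$. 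In the same way $\Delta = \Delta_r$ yields $\Delta\big(\sum_j x_j e_j\big)^b = \prod_j x_j^{b}$ and $\Delta\big(\sum_j (1-x_j)e_j\big)^{\lambda-p} = \prod_j (1-x_j)^{\lambda-p}$. Merging the first two monomials gives $\prod_j x_j^{\alpha_j + b}$, which is precisely the integrand of the statement; this settles the numerator.

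For the denominator I would invoke Proposition~\ref{prop:Schmid_diagonal}, namely $c_\alpha(T_\psi) = \left<T_\psi \phi_\alpha, \phi_\alpha\right>_\lambda / \left<\phi_\alpha, \phi_\alpha\right>_\lambda$. Taking $\psi \equiv 1$ gives $T_1 = I$, so that $\left<\phi_\alpha, \phi_\alpha\right>_\lambda$ is exactly the Lebesgue integral above with the symbol omitted and carrying the identical constant $C$. Forming the ratio cancels $C$ and produces the claimed formula. I do not expect any substantial obstacle: the argument is a direct substitution once Corollary~\ref{cor:RadialToepNormComputationCoord} is in hand, and the only points demanding genuine care are verifying that the coordinate exponents in $\Delta_\alpha$ on the diagonal sum correctly to $\alpha_j$ (the telescoping above) and that the same normalizing constant $C$ appears in numerator and denominator so as to cancel in the quotient.
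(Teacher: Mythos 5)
Your proposal is correct and follows essentially the same route as the paper: the paper likewise obtains this theorem by applying Corollary~\ref{cor:RadialToepNormComputationCoord} together with equation~\eqref{eq:Deltaj} to convert the polynomials $\Delta_\alpha$ and $\Delta$ into monomials in the coordinates $x_1,\dots,x_r$, and it identifies the denominator exactly as you do, via Proposition~\ref{prop:Schmid_diagonal} with the observation that $\psi\equiv 1$ gives the identity operator so the same constant $C$ cancels in the quotient. Your telescoping verification that the exponent of $x_m$ is $\alpha_m$ is the only computation the paper leaves implicit, and it is carried out correctly.
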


\section{Radial Toeplitz operators on the Bergman spaces}
\label{sec:radial_Toeplitz}
All the data associated to the domain $D$ involved in Theorem~\ref{thm:Schmid_diagonal_Lebesgue} is readily available in the references (see \cite{Helgason,UpmeierBook}). We collect this information and provide specific formulas for all irreducible bounded symmetric domains. Note that all the information for the classical cases are explicitly contained in the references, but not so in the exceptional cases. For the latter we provide the required arguments.

\subsection{Type I} Up to finite index covering maps the domain $D_{m,n}^I$ is given by the groups
\[
G = \SU(m,n), \quad K = \Spe(\U(m) \times \U(n)).
\]
For simplicity and without loss of generality we will assume that $m \leq n$. The elements of $K$ are matrices of the form
\[
\begin{pmatrix}
A & 0 \\
0 & B
\end{pmatrix},
\]
where $A \in \U(m), B \in \U(n)$ and $\det(A)\det(B) = 1$.

The domain in this case is given by
\[
D_{m,n}^I = \{ Z \in M_{m \times n}(\C) \mid ZZ^* < I_m\},
\]
where the $K$-action has the expression
\[
\begin{pmatrix}
A & 0 \\
0 & B
\end{pmatrix} Z = AZB^{-1},
\]
where $A,B$ are as above and $Z \in D^I_{m,n}$. And we have
\[
\dim D^I_{m,n} = mn, \quad r = m, \quad a = 2, \quad b = n-m, \quad p = n+m.
\]

A maximal collection of mutually orthogonal primitives is given by the $m \times n$ matrices $E_1, \dots, E_m$, where $E_j$ has $1$ at the $(j,j)$-th entry and $0$ elsewhere. Hence, we have
\[
E = (I_m, 0),
\]
where $0$ denotes here a $m \times (n-m)$ matrix. Proposition~\ref{prop:KA-decomposition_and_K-invsymbols} states that for every $Z \in D_{m,n}^I$ there exist $A \in \U(m), B \in \U(n)$ and a diagonal matrix $D$ of size $m \times m$ with entries in $[0,1)$ such that
\[
Z = A (D,0) B^{-1}.
\]

It follows that for every $\lambda > n+m-1$, $\psi$ bounded radial symbol and $\alpha \in \rN^m$ we have
\begin{multline*}
c_\alpha(T_\psi)
= \frac{\left<T_\psi \phi_\alpha,\phi_\alpha\right>_\lambda}
{\left<\phi_\alpha,\phi_\alpha\right>_\lambda} = \\
\frac{\displaystyle\int_{[0,1)^m}
	\psi(D(\sqrt{x}),0)
	\prod_{j=1}^{m} x_j^{\alpha_j+n-m} \prod_{j=1}^{m}(1-x_j)^{\lambda-m-n}
	\prod_{1\leq j<k \leq m}|x_j - x_k|^2 \dif x_1 \dots \dif x_m}
{\displaystyle\int_{[0,1)^m}
	\prod_{j=1}^{m} x_j^{\alpha_j+n-m} \prod_{j=1}^{m}(1-x_j)^{\lambda-m-n}
	\prod_{1\leq j<k \leq m}|x_j - x_k|^2 \dif x_1 \dots \dif x_m},	
\end{multline*}
where $D(\sqrt{x})$ denotes the $m\times m$ diagonal matrix whose entries along the diagonal are $\sqrt{x_1}, \dots, \sqrt{x_m}$.

If we assume that $m = 1$, then the domain is the unit ball $\mathbb{B}^n$ in $\C^n$ and the previous formula reduces to Theorem~6.2 from \cite{QuirogaGrudsky60}.

\subsection{Type II} Up to finite index covering maps the domain $D^{II}_m$ is given by the groups
\begin{align*}
G &= \SO^*(2m) = \left\{
\begin{pmatrix*}[r]
A & B \\
-\overline{B} & \overline{A}
\end{pmatrix*} \Big| A,B \in M_m(\C)
\right\} \cap \SU(m,m) \\
K &= \U(n) \cong \left\{
\begin{pmatrix*}[r]
A & 0 \\
0 & \overline{A}
\end{pmatrix*} \Big| A \in \U(m)
\right\}.
\end{align*}

The domain is give by
\[
D^{II}_m = \{ Z \in M_m(\C) \mid Z Z^* < I_m, Z^t = -Z \},
\]
with $K$-action
\[
A\cdot Z = A Z A^t,
\]
where $A \in \U(n)$ and $Z \in D^{II}_m$. The properties of this domain depend on the parity of $m$. Hence, we will write $m = 2n + \epsilon$ where $\epsilon \in \{0,1\}$. With this notation we now have
\[
\dim D^{II}_m = n(2n+2\epsilon-1), \quad r = n, \quad a = 4, \quad b = 2\epsilon, \quad
p = 4n + 2\epsilon - 2.
\]

For every $j = 1, \dots, n$ let us denote by $E_j \in M_m(\C)$ the block diagonal matrix of the form
\[
\begin{pmatrix}
0 & \cdots & 0 & \cdots  \\
\vdots & \ddots & \vdots \\
0 & \cdots & \begin{matrix*}[r]
0 & 1 \\
-1 & 0
\end{matrix*} & \cdots \\
\vdots & & \vdots &\ddots
\end{pmatrix},
\]
where the only non-zero block is the $(j,j)$-th one. In this description the blocks are $2 \times 2$ from the upper left corner and the $j$-th diagonal block has the pictured entry. Note that for $\epsilon = 1$, we have zero blocks of size $2 \times 1$ at the far right, of size $1 \times 2$ at the lower bottom and a single $0$ at the lower right corner. Hence, $E_1, \dots, E_n$ is a maximal collection of mutually orthogonal primitives. From this it is easy to write down $E$.

For this case, Proposition~\ref{prop:KA-decomposition_and_K-invsymbols} states that for every $Z \in D^{II}_m$ there exist $A \in \U(n)$, $t_1, \dots, t_n \in [0,1)$ such that
\[
Z = A D(t) A^t,
\]
where $D(t) = D(t_1, \dots, t_n) = \mathrm{Diag}(d(t_1), \dots, d(t_j), \dots)$ is a $m \times m$ matrix that is block diagonal with $2 \times 2$ blocks along the diagonal starting from the upper left corner and given by
\[
d(t_j) = \begin{pmatrix*}[r]
0 & t_j \\
-t_j & 0
\end{pmatrix*}.
\]
As before, we have to complete the matrix $D(t)$ with some smaller zero blocks for the case $\epsilon = 1$.

It follows that for $\lambda > 4n + 2\epsilon - 3$, $\psi$ bounded radial symbol and $\alpha \in \rN^n$ we have
\begin{multline*}
c_\alpha(T_\psi)
= \frac{\left<T_\psi \phi_\alpha,\phi_\alpha\right>_\lambda}
{\left<\phi_\alpha,\phi_\alpha\right>_\lambda} = \\
\frac{\displaystyle\int_{[0,1)^n}
	\psi(D(\sqrt{x}))
	\prod_{j=1}^{n} x_j^{\alpha_j+2\epsilon} \prod_{j=1}^{n}(1-x_j)^{\lambda-4n-2\epsilon+2}
	\prod_{1\leq j<k \leq n}|x_j - x_k|^4 \dif x_1 \dots \dif x_n}
{\displaystyle\int_{[0,1)^n}
	\prod_{j=1}^{n} x_j^{\alpha_j+2\epsilon} \prod_{j=1}^{n}(1-x_j)^{\lambda-4n-2\epsilon+2}
	\prod_{1\leq j<k \leq n}|x_j - x_k|^4 \dif x_1 \dots \dif x_n},
\end{multline*}
where the block diagonal matrix $D(\sqrt{x}) = D(\sqrt{x_1}, \dots, \sqrt{x_n})$ is given as before.

\subsection{Type III} Up to finite index covering maps the domain $D^{III}_n$ is given by the groups
\begin{align*}
G &= \Symp(n,\R), \\
K &= \U(n) = \left\{\begin{pmatrix}
A & 0 \\
0 & \overline{A}
\end{pmatrix} \Big| A \in \U(n) \right\}.		
\end{align*}

The domain is naturally identified is given by
\[
D^{III}_n = \{Z \in M_n(\C) \mid Z Z^* < I_n, Z^t = Z\},
\]
with $K$-action
\[
A\cdot Z = AZA^t,
\]
where $A \in \U(n)$ and $Z \in D^{III}_n$. For this domain we have
\[
\dim D^{III}_n = \frac{n(n+1)}{2}, \quad r = n, \quad a = 1, \quad b = 0, \quad p = n+1.
\]

A maximal collection of mutually orthogonal primitives is given by $E_1, \dots, E_n$, the canonical basis for the space of diagonal matrices in $M_n(\C)$. In particular, we have $E = I_n$. Proposition~\ref{prop:KA-decomposition_and_K-invsymbols} states that for every $Z \in D^{III}_n$ there exist $A \in \U(n)$ and a diagonal matrix $D$ with entries in $[0,1)$ such that
\[
Z = A D A^t.
\]

It follows that for every $\lambda > n$, $\psi$ a bounded radial symbol and $\alpha \in \rN^n$ we have
\begin{multline*}
c_\alpha(T_\psi)
= \frac{\left<T_\psi \phi_\alpha,\phi_\alpha\right>_\lambda}
{\left<\phi_\alpha,\phi_\alpha\right>_\lambda} = \\
\frac{\displaystyle\int_{[0,1)^n}
	\psi(D(\sqrt{x}))
	\prod_{j=1}^{n} x_j^{\alpha_j} \prod_{j=1}^{n}(1-x_j)^{\lambda-n-1}
	\prod_{1\leq j<k \leq r}|x_j - x_k| \dif x_1 \dots \dif x_n}
{\displaystyle\int_{[0,1)^n}
	\prod_{j=1}^{n} x_j^{\alpha_j} \prod_{j=1}^{n}(1-x_j)^{\lambda-n-1}
	\prod_{1\leq j<k \leq m}|x_j - x_k| \dif x_1 \dots \dif x_m},	
\end{multline*}
where $D(\sqrt{x})$ denotes the $n\times n$ diagonal matrix whose entries along the diagonal are $\sqrt{x_1}, \dots, \sqrt{x_n}$.

\subsection{Type IV} The groups associated to the domain $D^{IV}_n$ can be taken to be
\begin{align*}
G &= \SO(n,2) \\
K &= \SO(n) \times \SO(2).
\end{align*}

For this domain we have the realization
\[
D^{IV}_n = \Big\{ z \in \C^n \Big| |z|^2 < 2,\;  |z|^2 < 1 + \Big|\frac{1}{2} \sum_{j=1}^{n} z_j^2\Big| \Big\}
\]
with the $K$-action
\[
\begin{pmatrix*}[r]
A & 0 \\
0 & B
\end{pmatrix*} \cdot z = e^{-i\theta}A z,
\]
where $z \in D^{IV}_n$, $A \in \SO(n)$ and $B \in SO(2)$ has the form
\[
B =
\begin{pmatrix*}[r]
\cos(\theta) & -\sin(\theta) \\
\sin(\theta) & \cos(\theta)
\end{pmatrix*}.
\]
This domain has the following data
\[
\dim D^{IV}_n = n, \quad r = 2, \quad a = n-1, \quad b = 0, \quad p = n.
\]

A maximal collection of mutually orthogonal primitives is given by
\begin{align*}
e_1 &= \Big(\frac{1}{2}, \frac{i}{2}, 0, \dots, 0\Big) \\
e_2 &= \Big(\frac{1}{2}, -\frac{i}{2}, 0, \dots, 0\Big),
\end{align*}
and so we have $e = (1, 0, \dots, 0)$. Proposition~\ref{prop:KA-decomposition_and_K-invsymbols} says that for every $z \in D^{IV}_n$ there is $A \in \SO(n)$, $\theta \in \R$ and $t_1, t_2 \in [0,1)$ such that
\[
e^{i\theta}A z = t_1 e_1 + t_2 e_2.
\]

We now have that for every $\lambda > n-1$, $\psi$ a bounded radial symbol and $\alpha \in \rN^2$ we have
\begin{multline*}
c_\alpha(T_\psi)
= \frac{\left<T_\psi \phi_\alpha,\phi_\alpha\right>_\lambda}
{\left<\phi_\alpha,\phi_\alpha\right>_\lambda} = \\
\frac{\displaystyle\int_{[0,1)^2}
	\psi(\sqrt{x_1} e_1 + \sqrt{x_2} e_2)
	x_1^{\alpha_1} x_2^{\alpha_2} (1-x_1)^{\lambda-n} (1-x_2)^{\lambda-n}
	|x_1 - x_2|^{n-1} \dif x_1 \dif x_2}
{\displaystyle\int_{[0,1)^2}
	x_1^{\alpha_1} x_2^{\alpha_2} (1-x_1)^{\lambda-n} (1-x_2)^{\lambda-n}
	|x_1 - x_2|^{n-1} \dif x_1 \dif x_2}.	
\end{multline*}

\subsection{Exceptional type V} In the notation of \cite{Helgason}, the symmetric pair of Lie algebras that realize this domain is $(\gge_{6(-14)}, \so(10) + \R)$, also known as $E\,III$.

The properties of this domain are better described in terms of the octonians $\Oct$. We recall that $\Oct$ is an $8$-dimensional real composition algebra whose basic properties can be found in the literature (see \cite{SpringerVeldkamp}). We consider the complexified octonions $\Oct_\C = \Oct \otimes_\R \C$ and the $16$-dimensional complex vector space $\Oct_\C^2$. According to \cite{LoosBSDJordan}, the latter carries a Jordan pair structure that such that
\[
\{xyx\} = x(\widetilde{y}^tx),
\]
for every $x, y \in \Oct_\C^2$. Here, $\widetilde{y}$ is computed component-wise and comes from the the involution on $\Oct_\C$ obtained by complexifying the canonical conjugation on $\Oct$.

The exceptional domain $D^V$ has $\Oct_\C^2$ as associated Jordan pair (see \cite{LoosBSDJordan}) and so it is a bounded domain in this complex vector space. The data for this domain is the following
\[
\dim D^V = 16, \quad r = 2, \quad a = 6, \quad b = 4, \quad p = 12.
\]
Furthermore, using the Jordan pair structure described it is easy to check that a maximal collection of mutually orthogonal primitives is given by
\[
e_1 = (1,0), \quad e_2 = (0,1),
\]
and so $e = (1,1)$. In this case, Proposition~\ref{prop:KA-decomposition_and_K-invsymbols} states that for every $(a,b) \in D^V$ there exists $g \in K$ (where $K \simeq \SO(10) \times \T$) and $(t_1, t_2) \in [0,1)^2$ such that
\[
(a,b) = g(t_1, t_2).
\]

With respect to Toeplitz operators we now conclude that for every $\lambda > 11$, $\psi$ a bounded radial symbol and $\alpha \in \rN^2$ we have
\begin{multline*}
c_\alpha(T_\psi)
= \frac{\left<T_\psi \phi_\alpha,\phi_\alpha\right>_\lambda}
{\left<\phi_\alpha,\phi_\alpha\right>_\lambda} = \\
\frac{\displaystyle\int_{[0,1)^2}
	\psi(\sqrt{x_1},\sqrt{x_2})
	x_1^{\alpha_1+4}2_1^{\alpha_2+4} (1-x_1)^{\lambda-12} (1-x_2)^{\lambda-12}
	|x_1 - x_2|^6 \dif x_1 \dif x_2}
{\displaystyle\int_{[0,1)^2}
	x_1^{\alpha_1+4}2_1^{\alpha_2+4} (1-x_1)^{\lambda-12} (1-x_2)^{\lambda-12}
	|x_1 - x_2|^6 \dif x_1 \dif x_2}.	
\end{multline*}

\subsection{Exceptional type VI} In the notation of \cite{Helgason}, the symmetric pair of Lie algebras that realize this domain is $(\gge_{7(-25)}, \gge_6 + \R)$ which is also known as $E\,VII$.

Let us consider the $27$-dimensional complex vector space $H_3(\Oct_\C)$ of Hermitian $3 \times 3$ matrices with entries in $\Oct_\C$. By Hermitian we mean to satisfy
\[
\widetilde{x}^t = x.
\]
In particular, the elements of $H_3(\Oct_\C)$ are of the form
\[
\begin{pmatrix}
z_1 & a & b \\
\widetilde{a} & z_2 & c \\
\widetilde{b} & \widetilde{c} & z_3
\end{pmatrix},
\]
where $z_1, z_2, z_3 \in \C$ and $a,b,c \in \Oct_\C$. For $x,y \in H_3(\Oct_\C)$ denote the commutative product
\[
x \circ y = xy+yx.
\]
Then, $H_3(\Oct_\C)$ is a Jordan pair for the operation
\[
\{xyx\} = \frac{1}{2}(x \circ (x \circ y) - x^2 \circ y)
\]
for every $x, y \in H_3(\Oct_\C)$, where $x^2$ is just the square with the respect to product of matrices. As shown in \cite{LoosBSDJordan}, this is the Jordan pair associated to the domain $D^{VI}$. In particular, $D^{VI}$ has a realization as a circled bounded domain in $H_3(\Oct_\C)$. The data for this domain is given by
\[
\dim D^{VI} = 27, \quad r = 3, \quad a = 8, \quad b = 0, \quad p = 26.
\]
From the previous Jordan pair structure it is easy to see that a maximal collection of mutually orthogonal primitives is
\[
e_1 = E_{1,2} + E_{2,1}, \quad e_2 = E_{1,3} + E_{3,1}, \quad e_3 = E_{2,3} + E_{3,2},
\]
where as usual $E_{j,k}$ denotes the $3 \times 3$ matrix with all entries $0$ except at the position $(j,k)$ where it is $1$. For this case, Proposition~\ref{prop:KA-decomposition_and_K-invsymbols} implies that for every $x \in D^{VI}$ there are $g \in K$ (where $K \simeq E_6 \times \T$) and $(t_1, t_2, t_3) \in [0,1)^3$ such that
\[
x = g (t_1 e_1 + t_2 e_2 + t_3 e_3) =
g
\begin{pmatrix}
0 & t_1 & t_2 \\
t_1 & 0 & t_3 \\
t_2 & t_3 & 0
\end{pmatrix}.
\]

And for the Toeplitz operators we have that for every $\lambda > 25$, $\psi$ a bounded radial symbol and $\alpha \in \rN^3$ we have
\begin{multline*}
c_\alpha(T_\psi)
= \frac{\left<T_\psi \phi_\alpha,\phi_\alpha\right>_\lambda}
{\left<\phi_\alpha,\phi_\alpha\right>_\lambda} = \\
\frac{\displaystyle\int_{[0,1)^3}
	\psi\Big(\sum_{j=1}^{3} \sqrt{x_j} e_j\Big)
	\prod_{j=1}^{3} x_j^{\alpha_j} \prod_{j=1}^{3}(1-x_j)^{\lambda-26}
	\prod_{1\leq j<k \leq 3}|x_j - x_k|^8 \dif x_1 \dif x_2 \dif x_3}
{\displaystyle\int_{[0,1)^3}
	\prod_{j=1}^{3} x_j^{\alpha_j} \prod_{j=1}^{3}(1-x_j)^{\lambda-26}
	\prod_{1\leq j<k \leq 3}|x_j - x_k|^8 \dif x_1 \dif x_2 \dif x_r}.	
\end{multline*}

\bibliographystyle{amsplain}

\end{document}